\definecolor{slightblue}{rgb}{.8, .8, 1}
\definecolor{tif}{RGB}{10, 186, 181}
\definecolor{hair}{RGB}{100,225,190}
\definecolor{ruby}{RGB}{220,50,120}
\definecolor{grass}{RGB}{150,220,110}
\newtheorem*{rep@theorem}{\rep@title}
\newcommand{\newreptheorem}[2]{%
	\newenvironment{rep#1}[1]{%
		\def\rep@title{#2 \ref{##1}}%
		\begin{rep@theorem}}%
		{\end{rep@theorem}}}
\newtheorem*{rep@cor}{\rep@title}
\newcommand{\newrepcor}[2]{%
	\newenvironment{rep#1}[1]{%
		\def\rep@title{#2 \ref{##1}}%
		\begin{rep@cor}}%
		{\end{rep@cor}}}
\newtheorem*{rep@prop}{\rep@title}
\newcommand{\newrepprop}[2]{%
	\newenvironment{rep#1}[1]{%
		\def\rep@title{#2 \ref{##1}}%
		\begin{rep@prop}}%
		{\end{rep@prop}}}
\newtheorem{corollary}{Corollary}[section]
\newtheorem{corx}{Corollary}
\newtheorem{theorem}[corollary]{Theorem}
\newtheorem{thmx}[corx]{Theorem}
\newtheorem{proposition}[corollary]{Proposition}
\newtheorem*{theorem*}{Theorem}
\newtheorem{lemma}[corollary]{Lemma}
\theoremstyle{definition} 
\newtheorem{definition}[corollary]{Definition}
\theoremstyle{remark} \newtheorem{remark}[corollary]{Remark} \numberwithin{equation}{section}
\newtheorem*{remark*}{Remark}
\numberwithin{figure}{section}
\newcommand{\R}{\mathbb{R}}
\newcommand{\Hom}{\mathrm{Hom}}
\renewcommand{\P}{\mathbb{P}}
\newcommand{\C}{\mathfrak{C}}
\newcommand{\hC}{\widehat{\mathfrak{C}}}
\newcommand{\ttri}{\widetilde{\mathcal{X}}}
\newcommand{\tri}{\mathcal{X}}
\newcommand{\ve}[1]{{\boldsymbol{#1}}}
\newcommand{\T}{\mathsf{T}}
\newcommand{\dif}{\mathsf{d}}
\newcommand{\ac}[1]{{\boldsymbol{#1}}} 
\newcommand{\transp}[1]{\leftidx{^\mathsf{t}}{#1}}
\newcommand{\para}{\ve{T}}
\newcommand{\pa}{\partial}
\newcommand{\bpa}{{\bar{\partial}}}
\newcommand{\dz}{{\dif z}}
\newcommand{\dbz}{{\dif\bar z}}
\newcommand{\ima}{\boldsymbol{i}} 
\newcommand{\GL}{\mathrm{GL}}
\newcommand{\SL}{\mathrm{SL}}
\newcommand{\SO}{SO}
\newcommand{\eg}{\textit{e.g. }}
\newcommand{\cf}{\textit{c.f. }}
\newcommand{\ie}{\textit{i.e. }}
\newcommand{\RP}{\mathbb{RP}^2}
\begin{document}

\title{Limit polygons of convex domains in the projective plane} 
\author{Xin Nie} 
\address{Tsinghua University, Beijing.}
\email{nie.hsin@gmail.com}
\maketitle

\begin{abstract}
For any sequence of properly convex domains in the real projective plane such that the zeros of Pick differentials have bounded multiplicity and get further and further apart, we determined all Hausdorff limit domains that one can obtain after normalizing each member of the sequence by a projective transformation. We then show that the result can be applied to convex domains generated by projective triangular reflection groups.
\end{abstract}

\section{Introduction}
A \emph{properly convex domain} in the real projective plane $\RP$ is an open set contained in some affine chart as a bounded convex domain. This paper follows the approach, initiated by Labourie \cite{labourie_cubic} and Loftin \cite{loftin_amer}, of studying such domains using \emph{hyperbolic affine spheres}. The approach is motivated by convex $\RP$-structures on surfaces  \cite{goldman_gx} and is found connected to the theory of Higgs bundles \cite{hitchin, labourie_cubic}. More specifically, we study limiting behaviors of properly convex domains, which is related to degenerations of convex $\RP$-structures.

Given an oriented properly convex domain $\Omega$, we consider the \emph{Pick differential} of the unique complete hyperbolic affine sphere $\Sigma_\Omega\subset\R^3$ projecting to $\Omega$ (in an orientation preserving way), which is a holomorphic cubic differential on $\Sigma_\Omega$ with respect to a canonical conformal structure. We refer to its push-forward to $\Omega$ as the Pick differential of $\Omega$ and denote it by  $\ve{\psi}_\Omega$. 
Letting $\mathfrak{C}$ be the space of properly convex domains in $\RP$, endowed with the natural topology given by Hausdorff distance, we show:
\begin{thmx}\label{thm_intro2}
	Let $\Omega_1,\Omega_2,\cdots, \Omega\in\mathfrak{C}$ and assume that the Pick differential $\ve{\psi}_{\Omega_i}$ satisfies:
	\begin{itemize}
		\item on every $\Omega_i$, the flat metric underlying $\ve{\psi}_{\Omega_i}$ is complete;
		\item the infimum of distances between the zeros of $\ve{\psi}_{\Omega_i}$ with respect to the flat metric tends to $+\infty$ as $i\to\infty$;
		\item the multiplicities of the zeros of $\ve{\psi}_{\Omega_i}$ have an upper bound independent of $i$.
	\end{itemize}
	Then there exists a sequence $(a_i)$ in $\SL(3,\R)$ such that $a_i(\Omega_i)$ converges to $\Omega$ in $\C$ if and only if $\Omega$ is either a triangle or a regular $(n+3)$-gon, with $n\geq1$ satisfying the condition that every $\ve{\psi}_{\Omega_i}$, apart from at most finitely many exceptions, has a zero of multiplicity $n$. 
\end{thmx}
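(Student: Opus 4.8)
The plan is to use the dictionary between properly convex domains and their Pick differentials furnished by hyperbolic affine spheres, and to translate Hausdorff convergence of the normalized domains $a_i(\Omega_i)$ into pointed convergence of the corresponding affine spheres. Since $\SL(3,\R)$ acts linearly on $\R^3$ preserving hyperbolic affine spheres, specifying a normalizing element $a_i$ amounts, up to a compact ambiguity in the choice of adapted frame, to specifying a basepoint $p_i\in\Omega_i$, i.e.\ a point of the affine sphere $\Sigma_{\Omega_i}$; the matrix $a_i$ is then the unique element carrying the adapted frame at that point to a fixed reference frame. With this reformulation, I would first show that $a_i(\Omega_i)$ converges to $\Omega$ in $\C$ if and only if the pointed affine spheres $(\Sigma_{\Omega_i},p_i)$ converge in $C^\infty_{\mathrm{loc}}$ to the affine sphere $\Sigma_\Omega$ over $\Omega$ at the limit basepoint, using the continuity of the affine-sphere correspondence and the fact that local smooth convergence of the immersions forces Hausdorff convergence of the developed domains.

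The heart of the argument is to determine the possible pointed limits. As the affine sphere and its conformal structure are recovered from the pair (conformal class, Pick differential) by solving Wang's equation, the limit is governed by the pointed limit of the flat surfaces $\bigl(\Omega_i,\,|\ve{\psi}_{\Omega_i}|^{2/3}\bigr)$ based at $p_i$, and this is where the three hypotheses enter. Completeness of the flat metric rules out boundary or collapsing phenomena; the divergence of the mutual distances between zeros means that, for $i$ large, any fixed flat ball around $p_i$ contains at most one zero; and the uniform bound on multiplicities lets me pass to a subsequence along which that nearest zero has a constant multiplicity and its flat distance $d_i$ to $p_i$ converges in $[0,+\infty]$. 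Consequently the pointed limit flat surface is either the Euclidean plane $\mathbb{C}$ (when $d_i\to+\infty$) or $\mathbb{C}$ with a single cone point of angle $\tfrac{2\pi(n+3)}{3}$ (when $d_i$ stays bounded, the nearest zero having multiplicity $n$). These are exactly the flat structures of the monomial cubic differentials $\dif z^3$ and $z^n\,\dif z^3$ on $\mathbb{C}$.

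It then remains to solve the model, and both directions follow. By the classification of monomial cubic differentials on $\mathbb{C}$, Wang's equation with Pick differential $z^n\,\dif z^3$ has a unique complete solution whose affine sphere develops to the regular $(n+3)$-gon, the degenerate case $\dif z^3$ ($n=0$) giving the triangle; combined with stability of complete solutions under $C^\infty_{\mathrm{loc}}$ convergence of the differential, this identifies the limit domain with the corresponding polygon. For necessity, a given convergent sequence $a_i(\Omega_i)\to\Omega$ yields basepoints $p_i$ whose flat surroundings converge, after the above subsequence extraction, to one of the two models, so $\Omega$ is a triangle or a regular $(n+3)$-gon; since the full sequence converges, no subsequence can yield a different model, forcing $d_i$ to remain bounded with nearest-zero multiplicity $n$ for all large $i$, whence all but finitely many $\ve{\psi}_{\Omega_i}$ carry a zero of multiplicity $n$ (the cone angle pins down $n$, since distinct multiplicities give non-isometric cones). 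For sufficiency, if $\Omega$ is the regular $(n+3)$-gon with all but finitely many $\ve{\psi}_{\Omega_i}$ having a zero of multiplicity $n$, I take $p_i$ to be such a zero, so the spreading hypothesis forces the flat pointed limit to be $z^n\,\dif z^3$ and hence $a_i(\Omega_i)\to\Omega$; for the triangle I instead choose $p_i$ whose flat distance to the zero set tends to infinity, which completeness permits, realizing the model $\dif z^3$.

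The main obstacle is the compactness in the middle two steps: one must guarantee that the limiting solution of Wang's equation on the model surface is the complete one, rather than an incomplete or otherwise degenerate solution, so that the pointed limit affine sphere is genuinely the one over the model polygon. This requires a priori estimates on the Blaschke metrics $e^{2u_i}$ (equivalently, on the solutions $u_i$ of Wang's equation) that are uniform in $i$, together with a barrier or maximum-principle comparison against the model solution over large flat balls. The completeness and bounded-multiplicity hypotheses are exactly what supply these estimates and prevent escape of compactness; checking that they suffice, and that the resulting convergence is strong enough to control the developing maps, is where the substantive work lies.
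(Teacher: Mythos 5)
Your proposal follows essentially the same route as the paper: normalize each $\Omega_i$ by a Wang developing map based at a point $p_i$ chosen relative to the zero set of $\ve{\psi}_{\Omega_i}$, use the three hypotheses to show that the pointed flat data near $p_i$ subconverges to one of the models $(\mathbb{C},\dz^3)$ or $(\mathbb{C},z^n\dz^3)$, solve Wang's equation over large flat balls to get convergence of the Blaschke metrics and hence of the developing maps, and combine the Dumas--Wolf identification of $(\mathbb{C},z^n\dz^3)$ with the regular $(n+3)$-gon with uniqueness of pointed limits for the necessity direction. The analytic obstacle you flag at the end is resolved in the paper exactly as you guess: the barrier is a constant multiple $e^{\pm\epsilon}g_0^{(n)}$ of the model solution, compared via the super/subsolution principle (Lemma \ref{lemma_max}) after the curvature estimate at the center of a large flat disk (Lemma \ref{lemma_centerestimate}) pins down the boundary values; this is Proposition \ref{prop_metricconvergence}. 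For necessity the paper avoids your frame-normalization bookkeeping by invoking the Benoist--Hulin continuity of Blaschke metrics to get pointed Gromov--Hausdorff convergence directly, but the classification of possible complete pointed limits is the same as yours.

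One step would fail as stated: ``local smooth convergence of the immersions forces Hausdorff convergence of the developed domains.'' Compact convergence of $f_i$ to $f$ only shows that every compact subset of $\P(f(\mathbb{C}))$ is eventually contained in $\P(f_i(X_i))$; it does not prevent the projectivized images from accumulating outside $\overline{\P(f(\mathbb{C}))}$, since the immersions are controlled only on a fixed compact set and the images could open up far from the basepoint. The paper closes this by running the identical convergence argument simultaneously for the dual affine spherical immersions $f_i^*$ (Proposition \ref{prop_wangdeveloping}): uniform convergence $f_i^*\to f^*$ on compacta shows that every compact subset of the dual domain $\Omega^*$ is eventually contained in $\Omega_i^*$, which bounds $\Omega_i$ from the outside and, together with the inner containment, yields genuine Hausdorff convergence. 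You need to add this dual (or some other outer) bound to make the reduction from $C^\infty_{\mathrm{loc}}$ convergence of affine spheres to convergence in $\C$ legitimate.
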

Here, by a \emph{regular $k$-gon}, we mean a convex domain in $\RP$ projectively equivalent to a regular $k$-gon in the Euclidean plane $\mathbb{E}^2\subset\RP$.


On a closed surface $S$ of genus $g\geq2$, the aforementioned works of Labourie and Loftin give a one-to-one correspondence between convex $\RP$-structures and cubic differentials, and our theorem can be applied to sequences of convex $\RP$-structures whose corresponding cubic differentials are given by scaling a fixed one. Some asymptotic properties of such sequences have already been studied \cite{loftin_flat}, whereas the recent work of Dumas and Wolf \cite{dumas-wolf_limit} treats more general families. Theorem \ref{thm_intro2} implies:
\begin{corx}\label{coro_intro1}
Let $\Sigma$ be a closed Riemann surface, $\ve{\psi}$ be a nontrivial holomorphic cubic differential on $\Sigma$ and denote
$N_{\ve{\psi}}:=\big\{n\in\mathbb{N}\ \big|\ \text{$\ve{\psi}$ has a zero of  multiplicity $n$}\big\}$.
Let $(\zeta_i)$ be a sequence of complex numbers tending to $\infty$ and $\Omega_i\in\C$ be a developing image of  the convex projective structure on $\Sigma$ corresponding to the scaled cubic differential $\zeta_i\ve{\psi}$. Then there exists a sequence $(a_i)$ in $\SL(3,\R)$ such that $a_i(\Omega_i)$ converges to $\Omega\in\C$ if and only if either $\Omega$ is a triangle or a regular $(n+3)$-gon with $n\in N_\ve{\psi}$.
\end{corx}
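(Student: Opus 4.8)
The plan is to deduce Corollary \ref{coro_intro1} directly from Theorem \ref{thm_intro2} by verifying that the family $\Omega_i$ associated to the scaled differentials $\zeta_i\ve\psi$ satisfies the three hypotheses of the theorem. The key conceptual point is that scaling a holomorphic cubic differential by $\zeta_i$ rescales the underlying flat metric $|\ve\psi|^{2/3}$ by the factor $|\zeta_i|^{2/3}$, while leaving the divisor of zeros — and in particular their multiplicities — completely unchanged. This is what makes Theorem \ref{thm_intro2} applicable, since the set $N_{\ve\psi}$ of zero-multiplicities is a fixed invariant of $\ve\psi$, identical for every member of the sequence.

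First I would recall that under the Labourie–Loftin correspondence, the convex $\RP$-structure on $\Sigma$ attached to $\zeta_i\ve\psi$ has Pick differential equal to $\zeta_i\ve\psi$ on the developing image $\Omega_i$, up to the relevant identification. Since $\Sigma$ is closed, its preimage in $\Sigma_{\Omega_i}$ is a complete affine sphere and the pulled-back flat metric is complete on $\Omega_i$; this gives the first hypothesis. For the second hypothesis, I would observe that the flat metric underlying $\zeta_i\ve\psi$ is $|\zeta_i|^{2/3}$ times a fixed flat metric on the universal cover $\widetilde\Sigma$ descending from $\ve\psi$. Because $|\zeta_i|\to\infty$, every fixed positive distance between distinct zeros in $\widetilde\Sigma$ (with respect to the $\ve\psi$-metric) is dilated by $|\zeta_i|^{2/3}\to\infty$; since the zeros of $\ve\psi$ on the closed surface $\Sigma$ are finite in number and isolated, there is a uniform positive lower bound for the $\ve\psi$-distance between distinct lifted zeros, and multiplying by $|\zeta_i|^{2/3}$ forces the infimum of pairwise zero-distances for $\zeta_i\ve\psi$ to tend to $+\infty$. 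The third hypothesis is immediate: the multiplicities of the zeros of $\zeta_i\ve\psi$ equal those of $\ve\psi$, so they are bounded (indeed constant) independently of $i$.

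With the hypotheses verified, Theorem \ref{thm_intro2} applies and tells us that a normalized subsequence of $\Omega_i$ converges to $\Omega\in\C$ if and only if $\Omega$ is a triangle or a regular $(n+3)$-gon, where $n$ is the multiplicity of some zero that appears in all but finitely many of the $\zeta_i\ve\psi$. Since the divisor of $\zeta_i\ve\psi$ is independent of $i$, the admissible values of $n$ are exactly the multiplicities actually realized by zeros of $\ve\psi$, which is precisely the set $N_{\ve\psi}$. This matches the conclusion of the corollary.

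The main obstacle I anticipate is not in the analytic estimates but in setting up the dictionary cleanly: one must be careful that the developing image $\Omega_i$ in the statement really corresponds to the Pick differential $\zeta_i\ve\psi$ in the normalization demanded by Theorem \ref{thm_intro2}, and that ``distances between zeros'' is interpreted on the developing image rather than on $\Sigma$ itself. Because the affine sphere over $\Omega_i$ covers the compact affine sphere over $\Sigma$, the zeros of $\ve\psi_{\Omega_i}$ form the full orbit of finitely many $\pi_1(\Sigma)$-orbits, and I would need to confirm that the completeness together with cocompactness yields the uniform separation bound claimed above; this translation between the downstairs differential $\ve\psi$ on $\Sigma$ and the upstairs Pick differential $\ve\psi_{\Omega_i}$ on the developing image is the only step requiring genuine care.
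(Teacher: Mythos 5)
Your proposal is correct and takes essentially the same route as the paper, which states Corollary \ref{coro_intro1} as an immediate consequence of Theorem \ref{thm_intro2} without writing out the verification. The three checks you supply --- completeness of the flat metric pulled back to the developing image, dilation of the (cocompactness-derived) uniform separation of lifted zeros by the factor $|\zeta_i|^{2/3}\to\infty$, and the $i$-independence of the zero multiplicities identifying the admissible $n$ with $N_{\ve{\psi}}$ --- are exactly the intended argument.
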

While the proof of the ``if'' part of Theorem \ref{thm_intro2}, given in \S \ref{sec_regular}, is based on showing that the Blaschke metrics converge, the idea of the proof for the ``only if'' part, given in \S \ref{sec_limitconvex}, is that on one hand, a lemma of Benoist and Hulin \cite{benoist-hulin} about continuous dependence of affine spheres on convex domains implies that the Blaschke metric $g_i$ of $\Omega_i$ converges in the pointed Gromov-Hausdorff sense to that of $\Omega$; on the other hand, the assumptions on the zeros of $\ve{\psi}_{\Omega_i}$ allow us to classify all possible pointed Gromov-Hausdorff limits of $(g_i)$. 

The second part of the paper, \S \ref{sec_tits}, deals with properly convex domains generated by projective deformations of hyperbolic triangular reflection groups,
\begin{figure}[h]
	\centering
	\includegraphics[width=12cm]{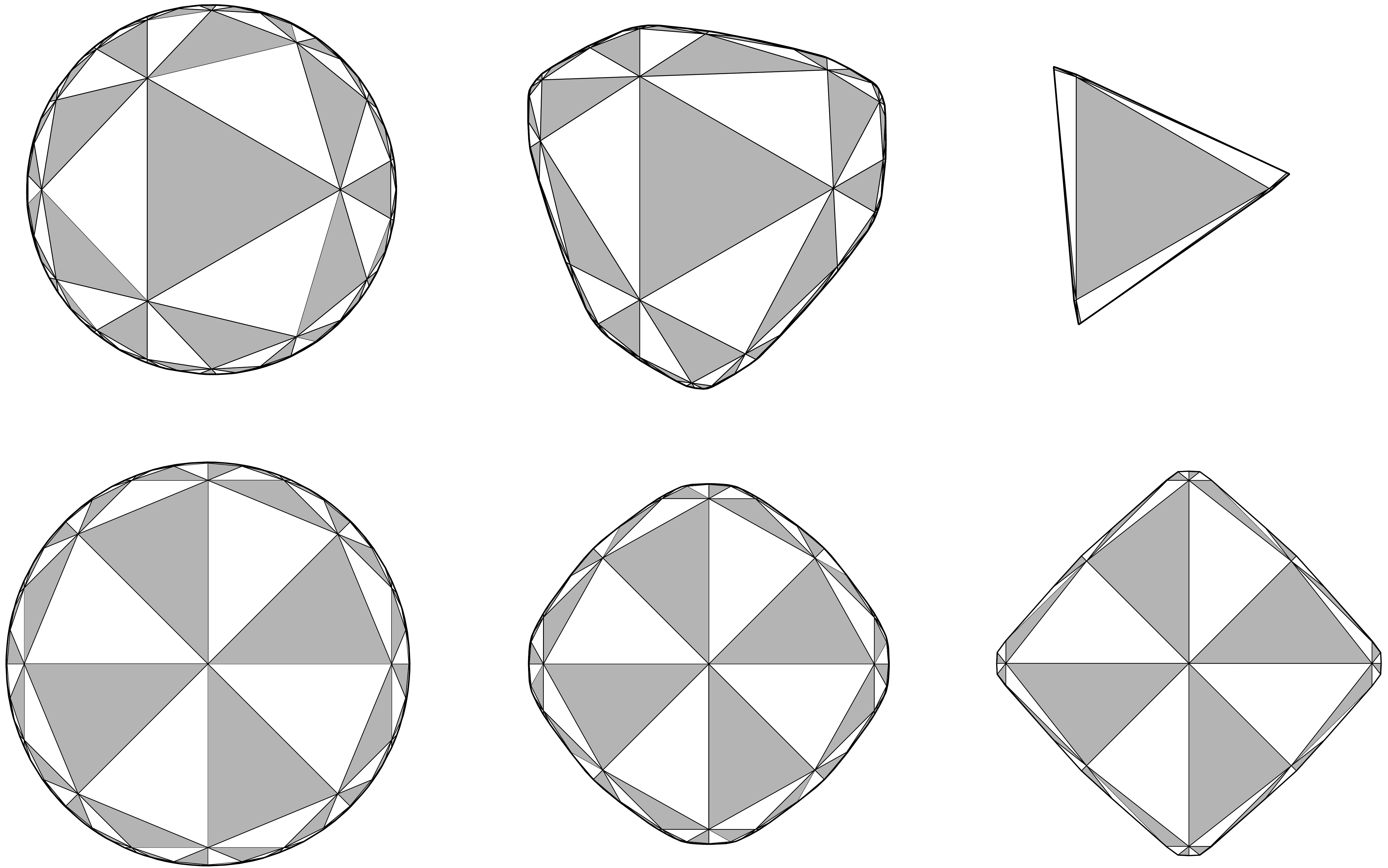}
\caption{Some properly convex domains generated by projective deformations of a hyperbolic $(4,4,4)$-triangle group. Each domain in the first row is projectively equivalent to the one underneath, although the two rows have different limit domains. 
	}
	\label{figure_tits}
\end{figure} 
which provide concrete examples for Theorem \ref{thm_intro2} and Corollary \ref{coro_intro1}. It has been observed from computer graphics (\cf \cite{nie_tits}) that when one goes far away in the moduli space of such deformations, the corresponding convex domain can be close to either a single fundamental triangle or a regular polygon formed by the fundamental triangles around a vertex, as illustrated in the two rows of Figure \ref{figure_tits} respectively. We deduce from Theorem \ref{thm_intro2} that these are the only possible limiting patterns:
\begin{thmx}\label{thm_intro3}
	Let $\Gamma_\Delta\subset\SO(2,1)\subset\SL(3,\R)$ be the hyperbolic reflection group generated by a triangle $\Delta\subset\mathbb{H}^2\subset\RP$ with angles $\frac{\pi}{a}$, $\frac{\pi}{a}$ and $\frac{\pi}{c}$ ($a,b,c\in\mathbb{N}$). Assume $a,b,c\geq 3$, so that the moduli space $\tri_\Delta$ of representations $\rho:\Gamma_\Delta\to\SL(3,\R)$ that are deformations of the inclusion is homeomorphic to $\R$ (see Proposition \ref{prop_para1}). Pick $\rho_t:\Gamma_\Delta\to\SL(3,\R)$ corresponding to $t\in\R\cong\tri$ for each $t$, and let $\Omega_t\in\mathfrak{C}$ be the properly convex domain preserved by $\rho_t(\Gamma_\Delta)$ (see Proposition \ref{prop_titsset}). Then there exist $(a_t)_{t\in\R}\subset\SL(3,\R)$ such that $a_t(\Omega_t)$ converges to $\Omega\in\C$ as $t$ goes to $+\infty$ or $-\infty$ if and only if $\Omega$ is a regular $k$-gon with $k\in\{3,a,b,c\}$. 
\end{thmx}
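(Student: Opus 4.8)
The plan is to reduce Theorem \ref{thm_intro3} to Theorem \ref{thm_intro2} by checking that the family $(\Omega_t)$ satisfies the three hypotheses on Pick differentials, and then to identify precisely which zero multiplicities occur. The essential geometric input is that each $\Omega_t$ is preserved by the cocompact group $\rho_t(\Gamma_\Delta)$, so the affine sphere $\Sigma_{\Omega_t}$ and all its associated invariant data descend to the quotient orbifold $\Omega_t/\rho_t(\Gamma_\Delta)$, which is a fixed triangle orbifold $\mathcal{O}$ of type $(a,b,c)$ independent of $t$. Consequently the Pick differential $\ve{\psi}_{\Omega_t}$ is the pullback of a holomorphic cubic differential $\ve{\psi}_t$ on the closed Riemann-orbifold $\mathcal{O}$, and the flat metric underlying it is automatically complete because it is invariant under a cocompact group acting on $\Omega_t$; this gives the first hypothesis.

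First I would pin down the dependence on $t$. Since $\tri_\Delta\cong\R$ parametrizes the deformations, and the holomorphic structure together with the cubic differential on the quotient orbifold varies with $t$, I would argue that the underlying conformal structure on $\mathcal{O}$ is rigid — a sphere with three cone points has no moduli — so only the cubic differential $\ve{\psi}_t$ genuinely moves in a one-dimensional space of holomorphic cubic differentials on $\mathcal{O}$ with prescribed pole/zero behavior at the three cone points. The orbifold Riemann--Roch computation then shows this space is one-dimensional, so $\ve{\psi}_t=\lambda(t)\,\ve{\psi}_0$ for a scalar $\lambda(t)$, and as $t\to\pm\infty$ one has $|\lambda(t)|\to\infty$ (the inclusion representation $t=0$ being the hyperbolic locus where $\ve{\psi}$ vanishes identically). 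This realizes $(\Omega_t)$ as exactly the scaling family of Corollary \ref{coro_intro1} applied to the orbifold, which is the conceptual heart of the reduction.

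Next I would verify the remaining two hypotheses and read off the allowed multiplicities. Scaling $\ve{\psi}_0$ by $\lambda$ multiplies the flat metric $|\ve{\psi}_0|^{2/3}$ by $|\lambda|^{2/3}$, so distances between zeros scale like $|\lambda(t)|^{1/3}\to\infty$, giving the second hypothesis, while the zero \emph{multiplicities} are unchanged under scaling and hence bounded, giving the third. It then remains to compute the set of multiplicities of $\ve{\psi}_t$ on $\Omega_t$. Lifting from the orbifold, a cone point of order $m$ contributes, after passing to the $\rho_t(\Gamma_\Delta)$-invariant differential upstairs, zeros of a definite multiplicity determined by $m$; the computation of the cubic differential's local exponents at a $\Z/m$ cone point shows the induced zero multiplicity on $\Omega_t$ is $m-3$ (interpreting multiplicity $0$, i.e.\ a nonvanishing value, when $m=3$). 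Thus the multiplicities appearing are exactly $\{a-3,b-3,c-3\}$, and by Corollary \ref{coro_intro1} the limit domains are triangles together with regular $(n+3)$-gons for $n$ in this set, i.e.\ regular $k$-gons with $k\in\{3,a,b,c\}$, matching the claim.

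The main obstacle I anticipate is the bookkeeping at the cone points: one must carefully translate between the holomorphic cubic differential on the orbifold $\mathcal{O}$, its pullback to the universal cover, and the Pick differential $\ve{\psi}_{\Omega_t}$ on the developing image $\Omega_t$, getting the correspondence between cone order $m$ and zero multiplicity $m-3$ exactly right — in particular confirming that an order-$3$ cone point yields a \emph{nonvanishing} Pick differential (no zero) so that it contributes the triangle rather than a polygon, and that orders $a,b,c\ge 3$ indeed produce the nonnegative multiplicities $a-3,b-3,c-3$ consistent with the regular $k$-gon indexing in Theorem \ref{thm_intro2}. Once this local computation is secured, the global conclusion follows formally from Corollary \ref{coro_intro1}.
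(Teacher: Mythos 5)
Your reduction to Theorem \ref{thm_intro2} is sound, but you reach the key structural facts about $\ve{\psi}_{\Omega_t}$ by a genuinely different route than the paper. You pass to the quotient orbifold $\Omega_t/\rho_t(\Gamma_\Delta^+)$ (a sphere with cone points of orders $a,b,c$), use the rigidity of its conformal structure to see that only the descended cubic differential varies, and then use a degree/Riemann--Roch count to see that the relevant space of cubic differentials is one-dimensional, so that $(\ve{\psi}_{\Omega_t})$ is a scaling family with zeros of multiplicities $a-3$, $b-3$, $c-3$. The paper instead works entirely upstairs: it shows (Proposition \ref{prop_conjugate}, Corollary \ref{coro_flip}) that the sides of the tiling are real trajectories of $\ve{\psi}_{\Omega_\rho}$, so the fundamental triangle is a $\frac13$-translation surface with real boundary, and a Gauss--Bonnet argument (Theorem \ref{thm_gaussbonnet}) then forces it to be the flat equilateral triangle $\ve{A}_t\subset(\mathbb{C},\dz^3)$; the whole Pick differential is the gluing of copies of $\ve{A}_{\pm t}$, and the multiplicity at a vertex shared by $n+3$ shaded triangles is read off directly. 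Your approach is softer and would generalize to other cocompact symmetry groups with rigid quotient; the paper's yields the explicit flat geometry of the fundamental domain, which it also uses to prove that $\rho\mapsto t(\rho)$ induces a homeomorphism $\tri_\Delta\to\R$.

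Two points in your sketch need more than you give them. First, the exact multiplicity: the local exponent computation at a cone point of order $m$ only shows that the invariant zero order upstairs is congruent to $-3$ modulo $m$; to get exactly $m-3$ you must also know that the descended differential has a pole of the maximal allowed order $2$ at each cone point. This does follow from your own degree count (the bundle $K^3(2p_1+2p_2+2p_3)$ on $\mathbb{P}^1$ has degree $0$, so its unique section up to scale is nowhere vanishing and has exact double poles), but it is the global count, not the local one, that pins this down. Second, and more seriously, you assert $|\lambda(t)|\to\infty$ as $t\to\pm\infty$ with only the remark that $\lambda=0$ corresponds to the trivial deformation; a priori $\lambda(t)$ could converge to a finite value along $t\to+\infty$, and then the second hypothesis of Theorem \ref{thm_intro2} (zeros drifting apart) would fail. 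To rule this out you need the inverse direction of the correspondence: every $\lambda$ in the real line of reflection-compatible cubic differentials on the orbifold is realized by some $[\rho]\in\tri_\Delta$, and the resulting bijection $\tri_\Delta\to\R$ is continuous, hence, being a continuous bijection between copies of $\R$, proper. This is exactly the content of the second half of Proposition \ref{prop_trho} in the paper, so the gap is fillable, but it is a genuine step rather than a parenthetical.
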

This result demonstrates the usefulness of cubic differentials as a tool in the study of convex $\RP$-structures, as the statement itself does not involve cubic differentials.
The reason why Theorem \ref{thm_intro2} applies is that the reflectional symmetries of $\Omega_t$ allow us to describe the Pick differential $\ve{\psi}_{\Omega_t}$, and in particular show that it gives a scaling family, and that a point $p\in\Omega_t$ is a zero of multiplicity $n$ if and only if $p$ is the common vertex of $n+3$ shaded triangles in Figure \ref{figure_tits}. See \S \ref{subsec_at} for details.

We finish this introduction by some remarks about the hypotheses of Theorem \ref{thm_intro2}. First, if the boundedness hypothesis on multiplicity is removed, we believe that the method here can be extended to prove a similar conclusion, with two additional types of limits besides regular polygons: ellipse and infinite-sided polygon whose vertices form an orbit of a parabolic projective transformation. In fact, both types arise as limits of sequences $(D_k)_{k=3,4,\cdots}$ in $\C$ where $D_k$ is a regular $k$-gon.
On the other hand, the hypothesis that the zeros of $\ve{\psi}_{\Omega_i}$ get further and further apart can be weakened to the condition that the zeros  form clusters that get further and further apart, and each cluster converges in certain sense to a polynomial cubic differential on $\mathbb{C}$. Under this condition, a similar conclusion also holds as long as one adds the non-regular polygons corresponding to those polynomial cubic differentials (in the sense of \cite{dumas-wolf}) to the limits. This situation is studied in \cite{dumas-wolf_limit}, the significance being that the condition is satisfied subsequentially for any sequence of cubic differentials on a sequence of closed Riemann surfaces that stay in a compact part of the Teichm\"uller space $\mathcal{T}_g$.
\subsection*{Acknowledgements}
We are grateful to Tengren Zhang for his interest in the subject, and to National University of Singapore and Yau Mathematical Sciences Center for the hospitality during the preparation of this paper.


\section{Preliminaries}\label{sec_preliminaries}
We give in this section a concise overview of the constructions relating affine spheres, cubic differentials and properly convex domains. Details and references can be found in \cite{benoist-hulin, dumas-wolf, loftin_neck, nie_poles}.
\subsection{Pick differential}\label{subsec_affinesphere}
We first outline the construction of the Pick differential $\ve{\psi}_\Omega$ for a properly convex domain $\Omega\subset\RP$. As mentioned, its depends on a choice of orientation on $\Omega$. So we denote
$$
\hC:=\left\{\text{ oriented properly convex domains in $\RP$ } \right\},
$$ 
which is a two-fold covering of the space $\C$ of properly convex domains. Given $\Omega\in\hC$, let $-\Omega$ denote the same properly convex domain with opposite orientation.

An element $\Omega\in\hC$ can be interpreted alternatively as a domain on the projective sphere $\mathbb{S}^2:=(\R^3\setminus\{0\})/\mathbb{R}_+$, hence is the projection of a unique properly convex cone $C_\Omega$ in $\R^3$. This gives a one-to-one correspondence between elements of $\hC$ and such cones. With this interpretation, $-\Omega$ is just the domain in $\mathbb{S}^2$ opposite to $\Omega$. 

Given $\Omega\in\hC$, a theorem of Cheng and Yau ensures that the cone $C_\Omega$ contains a unique hyperbolic affine sphere $\Sigma_\Omega$ asymptotic to the boundary $\pa C_\Omega$, whereas works of Cheng-Yau, A. M. Li, C. P. Wang, Au-Wan, among others, imply that the affine-differential-geometric intrinsic invariants of $\Sigma_\Omega$ reduce to a pair $(\ac{J},\ve{\psi})$, where $\ac{J}$ is a complex structure on $\Sigma_\Omega$ (compatible with the orientation) and $\ve{\psi}$ a holomorphic cubic differential on the Riemann surface $(\Sigma_\Omega,\ac{J})$.

The projection $\R^3\setminus\{0\}\to\RP$ induces an orientation-preserving diffeomorphism $\Sigma_\Omega\overset\sim\to\Omega$. The Pick differential $\ve{\psi}_\Omega$ is then defined as the push-forward of the Pick differential of $\Sigma_\Omega$. It is a holomorphic cubic differential with respect to the complex structure $\ac{J}_\Omega$ which is the push-forward of the above $\ac{J}$, and has the following fundamental properties:
\begin{itemize}
	\item $\ve{\psi}_\Omega$ is equivariant with respect to $\Omega\in\hC$ in the sense that $a_*\ve{\psi}_\Omega=\ve{\psi}_{a(\Omega)}$ for any $a\in\SL(3,\R)$;
	\item Reversing the orientation of $\Omega$ yields the conjugate cubic differential $\ve{\psi}_{-\Omega}=\overline{\ve{\psi}_\Omega}$, which is holomorphic with respect to the opposite complex structure $\ac{J}_{-\Omega}=-\ac{J}_\Omega$;
	\item The assignment $\Omega\mapsto (\Omega,\ve{\psi}_\Omega)$ gives a one-to-one correspondence between projective equivalence classes of oriented properly convex domains and conformal equivalence classes of pairs $(X,\ve{\psi})$, where $X$ is a contractible Riemann surface (conformally either  $\mathbb{C}$ or the unit disk $\mathbb{D}$) and $\ve{\psi}$ is a holomorphic cubic differential on $X$, with the constraint that if $X\cong\mathbb{C}$ then $\ve{\psi}$ is not constantly zero. 
	\item If $\Omega$ is an ellipse then $(\Omega,\ac{J}_\Omega)\cong\mathbb{D}$ and $\ve{\psi}_\Omega\equiv0$. In this case $C_\Omega$ is a quadratic cone and $\Sigma_\Omega$ is a component of a two-sheeted hyperboloid.
\end{itemize}

\subsection{Blaschke metric and Wang's equation}\label{subsec_blaschke}
For a hyperbolic affine sphere $\Sigma\subset\R^3$, an intrinsic invariant more fundamental than the Pick differential $\ve{\psi}$ is a Riemannian metric $g$ on $\Sigma$ called the \emph{Blaschke metric}, and the above complex structure $\ac{J}$ is just the one underlying $g$. However, in the case where $\Sigma$ is \emph{complete}, as considered above, the information of $g$ is fully covered by $\ve{\psi}$, because on one hand, $g$ and $\ve{\psi}$ satisfy \emph{Wang's equation}
\begin{equation}\label{eqn_wang}
\kappa_g=-1+2\|\ve{\psi}\|^2_g~,
\end{equation}
($\kappa_g$ is the curvature of $g$ and $\|\ve{\psi}\|_g$ the pointwise norm of $\ve{\psi}$ with respect to $g$); on the other hand, any Riemann surface endowed with a holomorphic cubic differential $\ve{\psi}$ carries a unique complete conformal metric $g$ satisfying the equation (excluding the case where the surface is not hyperbolic and $\ve{\psi}\equiv0$, see \cite[Thm. 1.1]{nie_poles}). 

We refer to any conformal metric $g$ on $\Sigma$ satisfying (\ref{eqn_wang}) as a \emph{solution} to Wang's equation for $(\Sigma,\ve{\psi})$. The curvature $\kappa_g$ of a solution can change sign in general, but the complete solution always has nonpositive curvature (\cite[Thm. 1.1]{nie_poles}).

Given $\Omega\in\hC$, similarly as the above definition of $\ve{\psi}_\Omega$, we define the Blaschke metric $g_\Omega$ of $\Omega$ as the push-forward of the Blaschke metric on the complete affine sphere $\Sigma_\Omega$. Alternatively, $g_\Omega$ is the unique complete solution to Wang's equation for $(\Omega,\ve{\psi}_\Omega)$. Note that $g_\Omega$ is still equivariant with respect to $\Omega$, but is orientation-independent in the sense that $g_{-\Omega}=g_\Omega$.

\subsection{Dual convex domain and affine sphere}\label{subsec_duality}
Let $\R^{3*}$ denote the vector space dual to $\R^3$, formed by all the linear forms on $\R^3$. The dual projective space $\mathbb{RP}^{2*}:=\P(\R^{3*})$ is the space of projective lines in $\RP$. Given $\Omega\in\C$, the \emph{dual domain} $\Omega^*$ is the properly convex domain in $\mathbb{RP}^{*2}$ formed by the projective lines disjoint from the closure $\overline{\Omega}$.

If $\Omega$ is endowed with an orientation, we can accordingly endow $\Omega^*$ with an orientation, such that the corresponding cones $C_\Omega\subset\R^3$ and $C_{\Omega^*}\subset\R^{*3}$ are dual to each other in the sense that $C_{\Omega*}$ is the set of linear forms on $\R^3$ taking positive values on $\overline{C}_\Omega\setminus\{0\}$. 

The complete hyperbolic affine spheres $\Sigma_\Omega$ and $\Sigma_{\Omega^*}$ in the cones $C_\Omega$ and $C_{\Omega^*}$ also turn out to be dual to each other in the following sense: Given a hyperbolic affine sphere $\Sigma\subset\R^3$, for each $v\in\Sigma$ we let $v^*\in\R^{*3}$ denote the linear form characterized by $v^*(v)=1$ and $v^*|_{\T_v\Sigma}=0$, where the tangent space $\T_v\Sigma$ is viewed as a subspace of $\R^3$ through translation. Then $\Sigma^*:=\{v^*\mid v\in\Sigma\}$ is a hyperbolic affine sphere in $\R^{*3}$, referred to as the affine sphere \emph{dual} to $\Sigma$.
\subsection{Wang's developing map}\label{subsec_wang}
A hyperbolic affine sphere can be reconstructed from its Blaschke metric and Pick differential through an integration procedure, called \emph{Wang's developing map} in the literature. In this subsection we review this constructed, formulated in the language of vector bundles and connections.

Given a contractible Riemann surface $X$ (conformally $\mathbb{C}$ or $\mathbb{D}$) and a holomorphic cubic differential $\ve{\psi}=\psi(z)\dz^3$ on $X$, assuming $\ve{\psi}\not\equiv0$ if $X\cong\mathbb{C}$, we let $g=e^{u(z)}|\dz|^2$ be the unique complete conformal metric satisfying Wang's equation \eqref{eqn_wang} as mentioned in \S \ref{subsec_blaschke}, and define the \emph{affine sphere connection} $\nabla_{\ve{\psi}}$ as the flat connection on the rank $3$ complex vector bundle 
$$
E_\mathbb{C}:=\T_\mathbb{C}X\oplus\underline{\mathbb{C}}
$$ 
over $X$ (where $\underline{\mathbb{C}}$ denotes the trivial line bundle over $X$ endowed with a canonical section $\underline{1}$) given by the following matrix expression under the frame $(\pa_z,\pa_{\bar z}, \underline{1})$:
$$
\nabla_{\ve{\psi}}=\dif+
\begin{pmatrix}
\pa u& e^{-u}\bar \psi\,\dbz&\dz\\[6pt]
e^{-u}\psi\,\dz&\bpa u&\dbz\\[6pt]
\frac{1}{2}e^u\dbz&\frac{1}{2}e^u\dz&0
\end{pmatrix}.
$$
Eq.\eqref{eqn_wang} implies that $\nabla_{\ve{\psi}}$ is flat and preserves the real sub-bundle
$$
E:=\T X\oplus\underline{\mathbb{R}}\subset E_\mathbb{C}.
$$ 
The flat vector bundle $(E,\nabla_{\ve{\psi}})$ is actually isomorphic to the one induced by an affine spherical immersion $f:X\to\R^3$ with Blaschke metric $g$ and Pick differential $\ve{\psi}$, hence we can recover $f$ from $\nabla_{\ve{\psi}}$ via the following definition and proposition.

\begin{definition}\label{def_wangdeveloping}
Given $X$, $\ve{\psi}$ as above and a base point $z_0\in X$, we let
$$
\para(z): E|_{z}\to E|_{z_0},\quad 
$$
denote the parallel transport of the flat connection $\nabla_\ve{\psi}$ along any path from $z\in X$ to $z_0$, and define \emph{Wang's developing map} associated to the data $(X,\ve{\psi},z_0)$ as
$$
f:X\to E|_{z_0}\cong\R^3,\ f(z):=\para(z)\underline{1}(z).
$$
\end{definition}
\begin{proposition}\label{prop_wangdeveloping}
The above map $f$ is a complete hyperbolic affine spherical immersion with Pick differential $\ve{\psi}$, and its dual affine spherical immersion (in the sense of \S \ref{subsec_duality}) is
$$
f^*:X\to E|_{z_0}^*\cong\R^{*3},\ f^*(z):=\para^*(z)\underline{1}^*(z),
$$
where
$\para^*(z)=\transp\para(z)^{-1}:E|_z^*\to E|_{z_0}^*$
is the parallel transport of the flat connection $\nabla_\ve{\psi}^*$ (the connection dual to $\nabla_\ve{\psi}$, defined on the dual vector bundle $E^*$) from $z$ to $z_0$. 
\end{proposition}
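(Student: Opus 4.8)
The plan is to regard the parallel transport $\para(z)$ as a flat trivialization of $(E_\mathbb{C},\nabla_\ve{\psi})$ and to read the structure equations of $f$ directly off the connection matrix. Since $X$ is contractible and $\nabla_\ve{\psi}$ is flat, $\para(z)$ is path-independent, so it is a well-defined isomorphism $E_\mathbb{C}|_z\overset\sim\to E_\mathbb{C}|_{z_0}\cong\mathbb{C}^3$ carrying $\nabla_\ve{\psi}$ to the trivial connection $\dif$. Consequently, for every smooth section $s$ one has $\paz(\para(z)s)=\para(z)(\nabla_\paz s)$, and likewise with $\pabz$. I would apply this to the three frame sections, setting $F_1:=\para(z)\pa_z$, $F_2:=\para(z)\pa_{\bar z}$ and $F_3:=\para(z)\underline 1=f$, and then differentiate.

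Reading the three columns of the matrix, the entries $\dz,\dbz,0$ of the third column immediately give $\paz f=F_1$ and $\pabz f=F_2$, that is $F_1=f_z$ and $F_2=f_{\bar z}$; moreover $f$ is real-valued since $\nabla_\ve{\psi}$ preserves the real subbundle $E$ and $\underline 1$ is a real section. Substituting $F_1,F_2$ into the remaining two columns yields the structure equations
\begin{gather*}
f_{z\bar z}=\tfrac12 e^{u}f,\qquad f_{zz}=u_z f_z+e^{-u}\psi f_{\bar z},\qquad f_{\bar z\bar z}=e^{-u}\bar\psi f_z+u_{\bar z}f_{\bar z}.
\end{gather*}
The first equation shows that $f$ has purely normal second derivative in the $z\bar z$-direction, with affine normal $\xi=f$, hence is a hyperbolic affine sphere centered at the origin of affine mean curvature $-1$; its normal coefficient $\tfrac12 e^u$ identifies the Blaschke metric with $g=e^{u}|\dz|^2$, while the $zz$-equation identifies the Pick differential with $\ve\psi=\psi\,\dz^3$ (up to the normalizing constant fixed in \S\ref{subsec_affinesphere}). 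Because $F_1,F_2,F_3$ are the images under the isomorphism $\para(z)$ of a frame of $E_\mathbb{C}|_z$, they are everywhere linearly independent, so $f$ is an immersion transverse to its tangent plane. Finally, $f$ is complete: $g$ was chosen as the complete solution of Wang's equation \eqref{eqn_wang}, and completeness of the Blaschke metric is exactly what ``complete affine sphere'' means.

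For the dual immersion I would compute directly. The covector $\underline 1^*$ is the last element of the frame dual to $(\pa_z,\pa_{\bar z},\underline 1)$, so $\underline 1^*(\pa_z)=\underline 1^*(\pa_{\bar z})=0$ and $\underline 1^*(\underline 1)=1$. Writing $P:=\para(z)$ and using $\para^*(z)=\transp P^{-1}$ together with the identity $(\transp P^{-1}\alpha)(Pv)=\alpha(v)$, one gets $f^*(z)(\para(z)v)=\underline 1^*(v)$ for every $v\in E|_z$. Taking $v=\underline 1,\pa_z,\pa_{\bar z}$ gives $f^*(z)\big(f(z)\big)=1$ and $f^*(z)(f_z)=f^*(z)(f_{\bar z})=0$, so $f^*(z)$ vanishes on the complexified---hence the real---tangent plane $\T_{f(z)}\Sigma$ and equals $1$ on $f(z)$. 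This is precisely the defining property of the dual point $f(z)^*$ from \S\ref{subsec_duality}, whence $f^*=(\,\cdot\,)^*\circ f$ parametrizes the dual affine sphere $\Sigma^*=\{v^*\mid v\in\Sigma\}$.

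The only genuinely delicate point, and the one I would watch most carefully, is matching the affine-differential-geometric normalizations so that the structures extracted from the connection are exactly the Blaschke metric $g$ and Pick differential $\ve\psi$, rather than fixed constant multiples of them; this is a bookkeeping matter to be reconciled with the conventions of the cited references. Once flatness of $\nabla_\ve{\psi}$ is granted---as recorded above, it amounts to Wang's equation together with the holomorphicity of $\ve\psi$---the moving-frame computation and the dual computation are both routine.
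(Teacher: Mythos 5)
Your argument is correct, but there is nothing in the paper to compare it against: the author does not prove Proposition \ref{prop_wangdeveloping} at all, instead deferring to \cite[\S 4]{loftin_flat} and \cite[\S 3.4]{loftin_neck} for an equivalent formulation. What you have written is essentially the standard moving-frame verification contained in those references: reading the columns of the connection matrix in the parallel trivialization gives exactly the structure equations $f_{z\bar z}=\tfrac12 e^u f$, $f_{zz}=u_zf_z+e^{-u}\psi f_{\bar z}$ of a hyperbolic affine sphere with position vector as affine normal, and the dual computation via $(\transp P^{-1}\alpha)(Pv)=\alpha(v)$ correctly recovers the characterization $v^*(v)=1$, $v^*|_{\T_v\Sigma}=0$ from \S\ref{subsec_duality}. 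Two small points deserve the care you already flag. First, to conclude that $\xi=f$ is the \emph{Blaschke} normal (and hence that $e^u|\dz|^2$ is the Blaschke metric and $\psi\,\dz^3$ the Pick differential rather than constant multiples thereof) one must check the volume normalization defining the affine normal; this is pure bookkeeping against the conventions of the cited references, as you say, but it is the only place where the identification could silently fail. Second, your completeness claim is slightly circular as phrased: completeness of $g$ is an input (it is how $g$ was chosen in \S\ref{subsec_wang}), and what it buys is that the resulting immersion is the \emph{complete} affine sphere asymptotic to the boundary of its cone, which is the content of the Cheng--Yau uniqueness quoted in \S\ref{subsec_affinesphere}; stating it that way closes the loop cleanly.
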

See \eg \cite[\S 4]{loftin_flat}, \cite[\S 3.4]{loftin_neck} for an equivalent formulation of the proposition. The main consequence of the proposition used latter on is that we can recover any $\Omega\in\hC$, up to projective equivalence, from $\ve{\psi}_\Omega$. More precisely, if $f$ is  Wang's developing map associated to $(\Omega,\ve{\psi}_\Omega,p)$ for any $p\in\Omega$, then the projectivized image $\P(f(\Omega))\subset\P(\T_p\Omega\oplus\mathbb{R})\cong\RP$ is projectively equivalent to $\Omega$.

As a specific property of the map $f$ in Definition \ref{def_wangdeveloping}, note that at the base point $z_0$, it takes the value $f(z_0)=\underline{1}_{z_0}$ and the differential $\dif f_{z_0}$ is just the inclusion of $\T_{z_0}X\hookrightarrow E|_{z_0}$. Therefore, the image $f(X)$ is contained in the affine half-space
$$
\left\{v+t\,\underline{1}_{z_0}\in E|_{z_0}=\T_{z_0}X\oplus\R\,\Big|\ v\in\T_{z_0}X,\ t\geq0\right\}
$$
because $f(X)$ is a complete convex surface and the origin $0\in E|_{z_0}$ lies on the concave side of $f(X)$. 

\subsection{Results on Wang's equation}\label{subsec_estimates}
 We need two more results about Wang's equation (\ref{eqn_wang}).  The first deals with \emph{supersolutions} and \emph{subsolutions} of the equation, defined respectively as conformal metrics $g_+$ and $g_-$ satisfying
 $$
 \kappa_{g_+}\geq -1+2\|\ve{\psi}\|^2_{g_+},\quad  \kappa_{g_-}\leq -1+2\|\ve{\psi}\|^2_{g_-}
 $$
(see \eg \cite[\S 2.1]{nie_poles} for details). In particular, given a solution $g$ to Wang's equation, since the curvature of the scaled metric $cg$ ($c>0$) is given by
 $$
 \kappa_{cg}=\tfrac{1}{c}\kappa_g=\tfrac{1}{c}\left(-1+2\|\ve{\psi}\|^2_g\right)=\tfrac{1}{c}\left(-1+2c^3\|\ve{\psi}\|^2_{cg}\right),
 $$
 $c g$ is a supersolution if $c\geq1$ and a subsolution if $c\leq1$. 
 In general, we have the following Comparison Principle (see \cite[Corollary 2.5]{nie_poles} for a proof):
 \begin{lemma}\label{lemma_max}
Let $\Sigma$ be a Riemann surface, $\ve{\psi}$ be a holomorphic cubic differential on $\Sigma$ and $g_+$ (resp. $g_-$) be a supersolution (resp. subsolution) to Wang's equation for $(\Sigma,\ve{\psi})$, such that $g_+\geq g_-$ on the boundary of a compact set $K\subset \Sigma$. Then $g_+\geq g_-$ on the whole $K$.
\end{lemma}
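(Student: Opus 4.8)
The plan is to recast the statement as a scalar comparison principle for a quasilinear elliptic PDE and then invoke the weak maximum principle. Since $g_+$ and $g_-$ are both conformal metrics on $\Sigma$, their ratio is a globally defined function: in any local conformal coordinate $z$, writing $g_\pm=e^{u_\pm}|\dz|^2$ and $\ve{\psi}=\psi(z)\,\dz^3$, the difference $w:=u_--u_+$ is independent of the coordinate (under a coordinate change both $u_\pm$ pick up the same additive term, which cancels), and the desired conclusion $g_+\geq g_-$ is exactly $w\leq0$. So I would reduce the whole statement to showing that $w\leq0$ on $K$ follows from $w\leq0$ on $\partial K$.

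Next I would translate the supersolution and subsolution conditions into differential inequalities. Using $\kappa_g=-\tfrac12 e^{-u}\Delta u$ (with $\Delta=4\pa_z\pa_{\bar z}$ the flat Laplacian) and $\|\ve{\psi}\|_g^2=|\psi|^2 e^{-3u}$, Wang's equation \eqref{eqn_wang} becomes $\Delta u=F(u)$ with $F(u):=2e^u-4|\psi|^2 e^{-2u}$, so the conditions read $\Delta u_+\leq F(u_+)$ and $\Delta u_-\geq F(u_-)$ respectively. Subtracting and applying the mean value theorem pointwise in the $u$-variable (at fixed $z$, hence with the same $\psi(z)$) gives
\[
\Delta w=\Delta u_--\Delta u_+\geq F(u_-)-F(u_+)=c\,w,
\]
where $c=F'(\xi)=2e^{\xi}+8|\psi|^2 e^{-2\xi}$ for some value $\xi$ lying between $u_-$ and $u_+$.

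The decisive structural feature is that $F$ is strictly increasing: the summand $-1$ on the right of \eqref{eqn_wang} produces the $2e^u$ term, so $c\geq 2e^{\xi}>0$ everywhere (in fact bounded below on the compact set $K$). With this in hand the conclusion follows from the weak maximum principle, with no appeal to its strong form. Suppose $w$ attains a positive maximum over $K$ at a point $x_0$; since $w\leq0$ on $\partial K$, this $x_0$ must be interior. In a chart about $x_0$ the second-derivative test gives $\Delta w(x_0)\leq0$, whereas the inequality above gives $\Delta w(x_0)\geq c(x_0)\,w(x_0)>0$, a contradiction. Hence $w\leq0$ throughout $K$, i.e. $g_+\geq g_-$.

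The only delicate points are bookkeeping rather than substance. First, the PDE is local while $w$ is global; this is harmless because $w$ is a well-defined scalar function and the maximum principle is applied at a single point, using any chart around it. Second, one must fix the regularity class in which supersolutions and subsolutions are taken (I use $C^2$, as in the definition) so that the pointwise second-derivative test is legitimate; under weaker notions the same argument runs with the corresponding weak or viscosity maximum principle. I expect the main conceptual step to be the observation that the strict monotonicity of $F$ forces $c$ to be strictly positive, which is precisely what lets the elementary weak maximum principle close the argument.
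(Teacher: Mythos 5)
Your proof is correct. The paper does not actually prove this lemma in the text --- it defers to \cite[Corollary 2.5]{nie_poles} --- and your argument is the standard one that such a reference would contain: reduce to the scalar inequality $\Delta w\geq F(u_-)-F(u_+)$ for the globally defined ratio function $w=\log(g_-/g_+)$, observe that $F(u)=2e^{u}-4|\psi|^{2}e^{-2u}$ is strictly increasing in $u$ (this is where the $-1$ in Wang's equation is used), and rule out a positive interior maximum of $w$ by the second-derivative test. All the computations (the curvature formula $\kappa_g=-\tfrac12 e^{-u}\Delta u$, the norm $\|\ve{\psi}\|_g^2=|\psi|^2e^{-3u}$, the sign of $F'$) check out, and your handling of the two delicate points --- the global well-definedness of $w$ and the $C^2$ regularity needed for the pointwise test --- is adequate.
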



The second result is a control of the curvature $\kappa_g$ at the center of a large flat disk when the curvature is nonpositive. Here  $B(0,R):=\{z\in\mathbb{C}\mid |z|<R\}$. 
\begin{lemma}\label{lemma_centerestimate}
There is a function $\lambda:\R_+\to\R_+$ with $\lim_{R\to+\infty}\lambda(R)=0$ such that for any nonpositively curved solution $g$ to Wang's equation for a Riemann surface $\Sigma$ with cubic differential $\ve{\psi}$, if $p\in \Sigma$ has a neighborhood on which $|\ve{\psi}|^\frac{2}{3}$ is isometric to the Euclidean disk $(B(0,R),\,|\dz|^2)$, with $p$ corresponding to $0$, then 
$$
e^{-\lambda(R)}\leq \kappa_g(p)+1=2\|\ve{\psi}\|^2_g(p)\leq 1.
$$
\end{lemma}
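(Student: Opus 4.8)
The plan is to work in the natural flat coordinate supplied by the hypothesis and to trap the value of $g$ at $p$ between the flat model solution and the complete solutions on finite disks, using the Comparison Principle (Lemma \ref{lemma_max}).

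First I would fix a conformal coordinate $z$ identifying the given neighborhood isometrically with $(B(0,R),|\dz|^2)$ and $p$ with $0$; in such a coordinate the cubic differential is $\ve{\psi}=\dz^3$, so writing $g=e^{u}|\dz|^2$ one has $\|\ve{\psi}\|_g^2=e^{-3u}$ and Wang's equation \eqref{eqn_wang} reads $\kappa_g=-1+2e^{-3u}$. The middle equality and the upper bound of the asserted inequality are then immediate: $\kappa_g(p)+1=2\|\ve{\psi}\|_g^2(p)$ is Wang's equation, while $\kappa_g\le0$ gives $2\|\ve{\psi}\|_g^2(p)=2e^{-3u(0)}\le1$. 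Note also that nonpositive curvature forces $u\ge\tfrac13\log2$ pointwise, i.e. $g\ge g_0$ for the constant metric $g_0:=2^{1/3}|\dz|^2$; a direct check shows $g_0$ is the flat ($\kappa\equiv0$) complete solution to Wang's equation on all of $\mathbb{C}$ for $\ve{\psi}\equiv1$.

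The real content is the lower bound, which I would extract from a one-sided barrier. For each $R$ let $h_R=e^{u_R}|\dz|^2$ be the unique complete solution to Wang's equation on $(B(0,R),\dz^3)$ (existence and uniqueness by \cite[Thm. 1.1]{nie_poles}); it is nonpositively curved and, being complete on a disk, blows up at $\partial B(0,R)$. Comparing $g$ (a solution, hence a subsolution) with $h_R$ (a solution, hence a supersolution) via Lemma \ref{lemma_max} on an exhaustion $\overline{B(0,\rho)}\nearrow B(0,R)$ — on each $\partial B(0,\rho)$ the finite metric $g$ is dominated by the blowing-up $h_R$ — yields $g\le h_R$, in particular $u(0)\le u_R(0)$. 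Setting $\lambda(R):=-\log\!\big(2\,\|\ve{\psi}\|_{h_R}^2(0)\big)=3u_R(0)-\log2\ge0$, this gives exactly $2\|\ve{\psi}\|_g^2(0)=2e^{-3u(0)}\ge 2e^{-3u_R(0)}=e^{-\lambda(R)}$. Crucially $\lambda$ depends only on $R$, not on $g$ or $\Sigma$, as required.

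It remains to show $\lambda(R)\to0$, equivalently $u_R(0)\to\tfrac13\log2$ as $R\to\infty$, which I expect to be the main obstacle. The values $u_R(0)$ are decreasing in $R$ (again by Lemma \ref{lemma_max}, since a complete solution on a larger disk is dominated on any smaller one) and bounded below by $\tfrac13\log2$, so $h_R$ decreases to a metric $g_\infty\ge g_0$ on $\mathbb{C}$. Local elliptic estimates show $g_\infty$ is again a solution of \eqref{eqn_wang}, and $g_\infty\ge g_0$ forces it to be complete; by uniqueness of the complete solution on $(\mathbb{C},\dz^3)$ it must equal $g_0$, whence $u_R(0)\downarrow\tfrac13\log2$ and $\lambda(R)\to0$. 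The one genuinely delicate point is making the exhaustion comparison clean when $g$ itself may degenerate toward $\partial B(0,R)$; this I would handle by running the comparison on each slightly smaller disk $B(0,\rho)$ against its own complete solution $h_\rho$ and then letting $\rho\nearrow R$, which only sharpens the bound and leaves the limiting behaviour of $\lambda$ unaffected.
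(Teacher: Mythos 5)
The paper does not actually prove this lemma itself: it cites \cite[Thm.\ 2.15]{nie_poles}, where an explicit barrier construction yields a $\lambda$ with exponential decay. Your argument is therefore a self-contained alternative, and in outline it is correct. It is the ``soft'' version of the proof: the middle equality and upper bound are just Wang's equation \eqref{eqn_wang} plus $\kappa_g\le 0$; the lower bound comes from trapping $g(0)$ below the complete solution $h_\rho$ of $(B(0,\rho),\dz^3)$ via Lemma \ref{lemma_max}, and then identifying $\lim_{R\to\infty}h_R$ with the flat solution $g_0=2^{1/3}|\dz|^2$ by monotonicity, interior elliptic estimates and uniqueness of the complete solution on $(\mathbb{C},\dz^3)$. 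This compactness-and-uniqueness route gives no rate for $\lambda(R)\to0$, whereas the cited reference's barriers give an exponential one; but the paper states explicitly that no rate is needed, so nothing is lost. Your final-paragraph correction --- comparing against $h_\rho$ for $\rho<R$ and letting $\rho\nearrow R$, rather than against $h_R$ itself --- is the right way to handle the fact that $g$ is only guaranteed to be bounded on each compact $\overline{B}(0,\rho)$, not on all of $B(0,R)$; the version in your second paragraph (``on each $\partial B(0,\rho)$ the finite $g$ is dominated by the blowing-up $h_R$'') is not literally correct for every $\rho$ and should be replaced by that fix throughout.

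The one step you assert too casually is that $h_\rho$, ``being complete on a disk, blows up at $\partial B(0,\rho)$'' in the uniform sense the Comparison Principle requires, namely that $e^{u_\rho}\to\infty$ uniformly as $|z|\to\rho$, so that $h_\rho\ge g$ on some whole circle $\partial B(0,\rho')$. Completeness of a conformal metric on a disk does not by itself force the conformal factor to tend to infinity uniformly at the boundary. Here it does, but you need two inputs you already have on hand: by uniqueness of the complete solution (and invariance of Wang's equation under $z\mapsto e^{\ima\theta}z$, which only changes $\dz^3$ by a unimodular factor), $h_\rho$ is rotationally symmetric, $u_\rho=v(r)$; and nonpositive curvature of the complete solution (\cite[Thm.\ 1.1]{nie_poles}, quoted in \S\ref{subsec_blaschke}) makes $v$ subharmonic, so $(rv')'\ge0$, $v'\ge0$, and $v$ is nondecreasing in $r$. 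A nondecreasing radial profile with $\int_0^\rho e^{v(r)/2}\,\dif r=\infty$ must tend to $+\infty$. The same uniform blow-up is also what underlies your monotonicity claim $u_R(0)\ge u_{R'}(0)$ for $R\le R'$. With that point made explicit, the proof is complete.
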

Although the result can be stated more simply as $\kappa_g(p)\geq-\mu(R)$ for a positive function $\mu$ with $\lim_{R\to+\infty}\mu(R)=0$, we use the above formulation because in the application in \S \ref{subsec_convergencewang}, we use the lemma to control the term $\|\ve{\psi}\|^2_g$. A proof of the lemma can be found in \cite[Thm. 2.15]{nie_poles}, where an explicit $\lambda$ with exponential decay is given. However, for the application in this paper, we do not need any information about the rate of the convergence $\lambda(R)\to0$.
\section{Regular polygons are limit domains}\label{sec_regular}
We prove in this section the ``if'' part of Theorem \ref{thm_intro2} by establishing a more precise result, Proposition \ref{prop_developingconvergence} below.
\subsection{Convergence of Wang's developing image}\label{subsec_convergencewang}
The following proposition can be viewed as specifying a normalization of the properly convex domains $\Omega_i$ in Theorem \ref{thm_intro2} which makes them converge. Here $B(0,r)$ denotes the disk $\{z\in\mathbb{C}\mid |z|<r\}$.
\begin{proposition}\label{prop_developingconvergence}
Let $n$ be a nonnegative integer. For $i=1,2,\cdots$, let
\begin{itemize}
	\item $X_i$ be a contractible Riemann surface;
	\item $\ve{\psi}_i$ be a nontrivial holomorphic cubic differential on $X_i$;
	\item $p_i\in X_i$ be a point which is a zero of $\ve{\psi}_i$ of multiplicity $n$ if $n\geq1$ and is an ordinary point of $\ve{\psi}_i$ if $n=0$;
    \item $U_i\subset X_i$ be a neighborhood of $p_i$, and $r_i$ be a positive number, tending to $\infty$ as $i\to\infty$, such that there is an isomorphism
	\begin{equation}\label{eqn_diskneighborhood}
	(U_i,\ve{\psi}_i)\cong(B(0,r_i),\, z^n\dz^3)
	\end{equation}
with $p_i$ corresponding to $0$;
\item $f_i:X_i\to \T_{p_i}X_i\oplus\R\cong\T_0\mathbb{C}\oplus\R$ be Wang's developing map associated to $(X_i,\ve{\psi}_i,p_i)$ (see  Definition \ref{def_wangdeveloping}), where we identify $\T_{p_i}X_i\cong \T_0\mathbb{C}$ through \eqref{eqn_diskneighborhood}.
\end{itemize}
Let $f:\mathbb{C}\to \T_0\mathbb{C}\oplus\R$ denote Wang's developing map associated to $(\mathbb{C},z^n\dz^3,0)$.
Then the projectivized image $\P(f_i(X))$, as a properly convex domain in $\P( \T_0\mathbb{C}\oplus\R)\cong\RP$, converges to $\P(f(\mathbb{C}))$ in the Hausdorff sense as $i\to\infty$.
\end{proposition}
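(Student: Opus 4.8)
The plan is to deduce everything from the convergence of the Blaschke metrics on compact sets, which propagates through Wang's construction to the developing maps. Write $g_i = e^{u_i}|\dz|^2$ for the restriction to $U_i\cong B(0,r_i)$ of the complete solution to Wang's equation on $X_i$ (the Blaschke metric), and let $g_\infty = e^{u_\infty}|\dz|^2$ be the complete solution to Wang's equation for $(\mathbb{C},z^n\dz^3)$. The central claim, proved last, is that $g_i\to g_\infty$ in $C^\infty_{\mathrm{loc}}(\mathbb{C})$. Granting this, the affine sphere connection $\nabla_{\ve{\psi}_i}$ of Definition \ref{def_wangdeveloping}, whose matrix on $U_i$ is an explicit expression in $u_i$ and $\psi=z^n$, converges in $C^\infty_{\mathrm{loc}}$ to $\nabla_{z^n\dz^3}$. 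Since parallel transport solves a linear ODE whose coefficients are the connection matrix, this forces $\para_i(z)\to\para_\infty(z)$, and likewise $\para_i^*(z)\to\para_\infty^*(z)$, uniformly for $z$ in compact subsets of $\mathbb{C}$. By Proposition \ref{prop_wangdeveloping} this means that both the developing map $f_i$ and its dual $f_i^*$ converge to $f$ and $f^*$ locally uniformly on $\mathbb{C}$.

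I would then extract Hausdorff convergence from these two local convergences. Put $\Omega_i:=\P(f_i(X_i))$ and $\Omega_\infty:=\P(f(\mathbb{C}))$; by the recovery statement following Proposition \ref{prop_wangdeveloping} these are properly convex, the increasing sets $\P(f(B(0,R)))$ exhaust $\Omega_\infty$ as $R\to\infty$, and $\P(f_i^*(X_i))$ is the dual domain $\Omega_i^*$. Local uniform convergence $f_i\to f$ then gives $\Omega_\infty\subseteq\liminf_i\Omega_i$, and the same reasoning applied to the dual maps gives $\Omega_\infty^*\subseteq\liminf_i\Omega_i^*$. To convert this into the reverse containment $\limsup_i\Omega_i\subseteq\overline{\Omega_\infty}$, suppose $x_i\in\Omega_i$ with $x_i\to x\notin\overline{\Omega_\infty}$; choose a projective line $\ell_0$ strictly separating $x$ from the compact convex set $\overline{\Omega_\infty}$, so that $\ell_0\in\Omega_\infty^*\subseteq\liminf_i\Omega_i^*$. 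Then $\ell_0$ is disjoint from $\overline{\Omega_i}$ for large $i$, forcing the connected set $\overline{\Omega_i}$ to lie on the side of $\ell_0$ containing $\Omega_\infty$ (it meets that side by the first containment), which contradicts $x_i\to x$ lying strictly on the other side. Hence $\Omega_i\to\Omega_\infty$ in $\C$.

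It remains to prove $g_i\to g_\infty$, the heart of the matter. Since the complete solution has nonpositive curvature, Wang's equation gives $2\|\ve{\psi}_i\|_{g_i}^2=\kappa_{g_i}+1\le1$, i.e. $u_i\ge\tfrac13\log(2|z|^{2n})$ everywhere. Around any point of $B(0,r_i)$ away from $0$ the flat metric $|\ve{\psi}_i|^{2/3}=|z|^{2n/3}|\dz|^2$ contains a flat disk whose radius, for fixed $z\ne0$ and large $i$, is bounded below by the fixed positive flat distance from $z$ to $0$; the center estimate (Lemma \ref{lemma_centerestimate}) then bounds $u_i$ from above there, so $u_i$ is controlled on both sides, uniformly in $i$, on compact subsets of $\mathbb{C}\setminus\{0\}$. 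Near $0$ I would obtain uniform bounds from the Comparison Principle (Lemma \ref{lemma_max}): the complete solution $h_i$ on $(B(0,r_i),z^n\dz^3)$ dominates $g_i$ by completeness of $h_i$ at $\partial B(0,r_i)$ and decreases to $g_\infty$, bounding $u_i$ above, while a maximum-principle comparison bounds it below. By elliptic regularity $\{u_i\}$ is precompact in $C^\infty_{\mathrm{loc}}(\mathbb{C})$. Any subsequential limit $g_\star=e^{u_\star}|\dz|^2$ solves Wang's equation on $\mathbb{C}$, and the lower bound $e^{u_\star/2}\gtrsim|z|^{n/3}$ forces every ray to $\infty$ to have infinite length, so $g_\star$ is complete; by uniqueness of the complete solution $g_\star=g_\infty$, and therefore the whole sequence converges.

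The main obstacle is precisely this identification of the subsequential limit: we have no control over $X_i$ outside $U_i$, so the Blaschke metric $g_i$ could a priori be driven far below $g_\infty$ by the unseen part of $X_i$. The point is that the center estimate, applied on the enlarging flat disks $B(0,r_i)$, supplies exactly enough lower bound to force the limit metric to be complete at infinity, and completeness is the hypothesis under which Wang's equation has a unique solution. The duality step is what lets us bound $\Omega_i$ from outside without ever understanding $f_i$ on $X_i\setminus U_i$, since the dual developing map is built from the same connection and therefore also converges on $B(0,R)$.
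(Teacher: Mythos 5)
Your proposal is correct and its overall architecture coincides with the paper's: convergence of the Blaschke metrics on compact sets implies convergence of the affine sphere connections, Gronwall's inequality then gives uniform convergence of the parallel transports and hence of $f_i$ and of the dual map $f_i^*$ on $\overline{B}(0,r)$, and these two convergences together pin down $\P(f_i(X_i))$ from inside and, via the dual domain, from outside. The one step where you genuinely diverge is the proof that $g_i\to g_0^{(n)}$ (the paper's Proposition \ref{prop_metricconvergence}). The paper argues directly: it applies Lemma \ref{lemma_centerestimate} at every point of the circle $\pa B_h(0,R(r_i)/2)$ to show that the conformal ratio $g_i/g_0^{(n)}$ lies in $[e^{-\epsilon/3},e^{\epsilon/3}]$ there, then sweeps this two-sided bound into the interior with the Comparison Principle, obtaining the quantitative sandwich $e^{-\epsilon}g_0^{(n)}\leq g_i\leq e^{\epsilon}g_0^{(n)}$ before bootstrapping. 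You instead run a soft compactness argument: a priori bounds, Arzel\`a--Ascoli, and identification of every subsequential limit as the complete solution. Your key observation --- that nonpositive curvature gives $g_i\geq 2^{1/3}|z|^{2n/3}|\dz|^2$, so any limit dominates the complete flat metric and is therefore itself complete, whence uniqueness of the complete solution applies --- is correct and attractive, but it leans on facts the paper does not state: that the complete solution on $B(0,r_i)$ dominates any solution defined on a larger surface (this needs pointwise blow-up of the complete solution at $\pa B(0,r_i)$, not merely metric completeness) and that these complete solutions decrease to $g_0^{(n)}$; both are standard but require their own comparison arguments, whereas the paper's barrier argument is self-contained given its two lemmas and is effective in $\epsilon$. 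Two smaller points: in your separation step, ``sides'' of a projective line must be interpreted inside an affine chart complementary to a line of $\Omega_\infty^*$ (the complement of a single line in $\RP$ is connected); and to pass from uniform convergence of $f_i$ to uniform convergence of $\P f_i$ one needs the paper's observation that the images lie in a closed affine half-space avoiding the origin, on which $\P$ is Lipschitz. Neither affects the validity of your argument.
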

We postpone the proof to the subsequent sections and proceed to give:
\begin{proof}[Proof of the ``if'' part of Theorem \ref{thm_intro2}]
Let $(\Omega_i)$ be as in the hypotheses of the theorem and $n$ be either $0$ or a positive integer such that every $\ve{\psi}_{\Omega_i}$ has a zero $p_i\in\Omega_i$ of multiplicity $n$. 
In the latter case, since the flat metric $h_i:=|\ve{\psi}_i|^\frac{2}{3}$ is complete and the $h_i$-distance from $p_i$ to the other zeros of $\ve{\psi}_{\Omega_i}$ tends to $\infty$, we can find a neighborhood $U_i$ of $p_i$ such that $(U_i,\ve{\psi}_{\Omega_i})\cong (B(0,r_i),z^n\dz^3)$, with $r_i\to\infty$, as required in the assumptions of Proposition \ref{subsec_convergencewang}. In the case $n=0$, we can also pick an ordinary point $p_i\in\Omega_i$ and a neighborhood $U_i$ of $p_i$ with the same property. Applying Proposition \ref{prop_developingconvergence}, we conclude that the projectivized image $\Omega_i'$ of Wang's developing map $f_i$ associated to $(\Omega_i,\ve{\psi}_{\Omega_i}, p_i)$ converges to $\P(f(\mathbb{C}))$. This implies the required statement because $\Omega_i'$ is projectively equivalent to $\Omega_i$ (see \S \ref{subsec_wang}) whereas $\P(f(\mathbb{C}))$ is a regular $(n+3)$-gon by \cite{dumas-wolf}.
\end{proof}

To prove Proposition \ref{prop_developingconvergence}, we first show in Proposition \ref{prop_metricconvergence} below that the complete solution $g_i$ of Wang's equation on $(X_i,\ve{\psi}_i)$ compactly $C^2$-converges to the solution on $(\mathbb{C},\,z^n\dz^3)$, which implies the same convergence property for the corresponding affine sphere connections. Then in \S \ref{subsec_poof} we deduce the required convergence of Wang's developing maps from the basic fact, reviewed in \S \ref{subsec_ode}, that the solution to a linear ODE system depends continuously on parameters.

\subsection{Convergence of solutions of Wang's equation}\label{subsec_convergencewang}
Given a nonnegative integer $n$, let $g_0^{(n)}$ denote the complete solution to Wang's equation for $(\mathbb{C},\,z^n\dz^3)$ (see \S \ref{subsec_blaschke}). The analytic results in \S \ref{subsec_estimates} allow us to show:
\begin{proposition}\label{prop_metricconvergence}
Let $n\geq0$ be an integer, $(r_i)_{i=1,2,\cdots}$ be a sequence of positive numbers tending to $\infty$, and $g_i$ be a solution to Wang's equation on $(B(0,r_i), z^n\dz^3)$ with nonpositive curvature. Then for any $r>0$, the sequence of metrics $(g_i)_{i\geq i_0}$  $C^2$-converges on $B(0,r)$ to $g_0^{(n)}$ (where $i_0$ is such that $r_i>r$ for all $i\geq i_0$).
\end{proposition}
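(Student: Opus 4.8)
The plan is to pass to conformal factors and sandwich them between upper and lower barriers that both converge to the factor of $g_0^{(n)}$. Write $g_i=e^{u_i}|\dz|^2$ and let $u_0$ be the conformal factor of $g_0^{(n)}$. In this gauge Wang's equation \eqref{eqn_wang} becomes the semilinear elliptic equation $\Delta u=2e^{u}-4|z|^{2n}e^{-2u}$ with $\Delta=4\pa_z\pa_{\bar z}$, whose right-hand side is increasing in $u$; this monotonicity is precisely what makes the Comparison Principle (Lemma \ref{lemma_max}) available. The nonpositive curvature hypothesis is equivalent to the pointwise bound $\|\ve{\psi}\|^2_{g_i}\le\tfrac12$, i.e.\ to $g_i\ge c_0\,|\ve{\psi}|^{\frac23}$, where $|\ve{\psi}|^{\frac23}$ is the flat metric (in which $\|\ve{\psi}\|\equiv1$) and $c_0=2^{\frac13}$ is the universal constant making $c_0|\ve{\psi}|^{\frac23}$ the largest flat subsolution. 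The goal is to produce, on each fixed $B(0,r)$, functions bounding $u_i$ above and below that both tend to $u_0$; interior elliptic regularity will then upgrade the resulting $C^0$-convergence to $C^2$.

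For the upper bound I would compare $g_i$ with the complete solution $U_\rho$ to Wang's equation on the smaller disk $(B(0,\rho),z^n\dz^3)$ for $\rho<r_i$, which exists and is unique by \cite[Thm. 1.1]{nie_poles}. Being complete, $U_\rho$ blows up at $\pa B(0,\rho)$, whereas $g_i$ is smooth on $\overline{B(0,\rho)}\subset B(0,r_i)$; hence $U_\rho\ge g_i$ near that circle, and the Comparison Principle gives $u_i\le U_\rho$ on all of $B(0,\rho)$. As $\rho\to\infty$ the $U_\rho$ decrease (a larger domain forces a smaller complete solution, again by comparison) and converge to $u_0$; this monotone exhaustion is the standard construction of the complete solution on $\mathbb{C}$. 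By Dini's theorem the convergence is locally uniform, so for $i$ large enough $u_i\le u_0+\eta$ on $B(0,r)$.

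The lower bound is the crux, because the flat bound $g_i\ge c_0|\ve{\psi}|^{\frac23}$ degenerates at the zero $z=0$ and gives no control there. I would instead transport a lower bound inward from a large circle using a \emph{global} subsolution. The rescaled complete metric $e^{-s}g_0^{(n)}$ is a subsolution for every $s\ge0$ (as noted in \S\ref{subsec_estimates}). On a circle $\pa B(0,\rho)$ the flat bound gives $g_i\ge c_0|\ve{\psi}|^{\frac23}$, while the center estimate (Lemma \ref{lemma_centerestimate}), applicable since $g_0^{(n)}$ is nonpositively curved, bounds $g_0^{(n)}$ the other way: a point with $|z|=\rho$ lies at the center of a flat disk of radius $R(\rho)\to\infty$ (its flat distance to the only zero $0$ tends to $\infty$ and there is no boundary on $\mathbb{C}$), so $\|\ve{\psi}\|^2_{g_0^{(n)}}\ge\tfrac12 e^{-\lambda(R(\rho))}$, i.e.\ $g_0^{(n)}\le c_0 e^{\lambda(R(\rho))/3}|\ve{\psi}|^{\frac23}$, there. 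Combining the two inequalities on $\pa B(0,\rho)$ yields $u_0-u_i\le s_\rho$ with $s_\rho:=\tfrac13\lambda(R(\rho))$, hence $e^{-s_\rho}g_0^{(n)}\le g_i$ on that circle. Since the left side is a subsolution and $g_i$ is a (super)solution, the Comparison Principle propagates $u_i\ge u_0-s_\rho$ to the whole disk $B(0,\rho)$, \emph{including across $z=0$}. Because $\lambda(R)\to0$, we get $s_\rho\to0$ as $\rho\to\infty$, so for $i$ large enough $u_i\ge u_0-\eta$ on $B(0,r)$.

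Together the two bounds give $|u_i-u_0|\to0$ uniformly on every $B(0,r)$. To finish I would feed these uniform $C^0$-bounds into interior Schauder estimates for $\Delta u_i=2e^{u_i}-4|z|^{2n}e^{-2u_i}$, obtaining uniform $C^{2,\alpha}_{\mathrm{loc}}$-bounds; as the only possible $C^2$-limit is the $C^0$-limit $u_0$, this yields $u_i\to u_0$, hence $g_i\to g_0^{(n)}$, in $C^2$ on $B(0,r)$. The one genuinely delicate point is the lower bound near the zero $z=0$: the flat comparison metric collapses there, and it is only the combination of the global subsolution $e^{-s}g_0^{(n)}$ with the center estimate—which forces the deficit $s_\rho$ between $g_0^{(n)}$ and the flat metric on a far-out circle to decay—that lets the Comparison Principle carry a sharp lower bound through the zero.
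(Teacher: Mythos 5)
Your proposal is correct, and its overall architecture---sandwiching $g_i$ between a supersolution and a subsolution that both converge locally uniformly to $g_0^{(n)}$, then bootstrapping the resulting $C^0$-convergence to $C^2$ by interior elliptic estimates---is the same as the paper's. Your lower bound is in substance identical to the paper's: both combine $2\|\ve{\psi}\|^2_{g_i}\le 1$ (equivalent to $\kappa_{g_i}\le 0$) with Lemma \ref{lemma_centerestimate} applied to $g_0^{(n)}$ at points of a large circle, and then propagate $e^{-s}g_0^{(n)}\le g_i$ inward across the zero via Lemma \ref{lemma_max}. Where you genuinely diverge is the upper bound. The paper treats it symmetrically, applying Lemma \ref{lemma_centerestimate} to $g_i$ as well at points of $\pa B_h(0,R(r_i)/2)$, using flat disks of radius $R(r_i)/2$ that stay inside $B(0,r_i)$ and avoid the singularity; this gives $2\|\ve{\psi}\|^2_{g_i}\ge e^{-\lambda(R(r_i)/2)}$ there, hence $g_i\le e^{\epsilon/3}g_0^{(n)}$ on that circle, and comparison finishes. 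You instead dominate $g_i$ by the complete solution $U_\rho$ on $(B(0,\rho),z^n\dz^3)$ and let $\rho\to\infty$. This works, but it imports facts beyond the two lemmas of \S \ref{subsec_estimates}: that $U_\rho$ blows up pointwise at $\pa B(0,\rho)$ (completeness alone does not give this; one needs, e.g., $\kappa_{U_\rho}\ge-1$ together with Yau's Schwarz lemma to get $U_\rho$ above the Poincar\'e metric), and the resulting maximality of complete solutions, which drives the monotone exhaustion $U_\rho\downarrow g_0^{(n)}$. These are standard in this literature but should be justified or cited; the paper's symmetric route is shorter and self-contained. Two small repairs: you cannot invoke Schauder directly from $C^0$-bounds, since the right-hand side $2e^{u_i}-4|z|^{2n}e^{-2u_i}$ is then only $C^0$ rather than $C^\alpha$---insert an $L^p$/Sobolev step first, as the paper does; and when applying Lemma \ref{lemma_centerestimate} at $|z|=\rho$ you take the flat radius to be exactly the distance $R(\rho)$ to the zero, whereas using $R(\rho)/2$ avoids any question of whether the open flat ball of that exact radius is an embedded Euclidean disk, at no cost since $\lambda(R(\rho)/2)\to0$ anyway.
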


\begin{proof}
We first establish $C^0$-convergence. Namely, given any $\epsilon>0$, we shall show
\begin{equation}\label{eqn_gg0}
e^{-\epsilon}g_0^{(n)}\leq g_i\leq e^\epsilon g_0^{(n)}
\end{equation}
in $B(0,r)$ when $i$ is big enough.

To this end, let $h:=|z|^\frac{2n}{3}|\dz|^2$ denote the flat metric on $\mathbb{C}$ underlying the cubic differential $z^n\dz^3$. The $h$-distance from any $z\in\mathbb{C}$ to $0$ can be written as $R(|z|)$, with
$$
R(t)=\left(\tfrac{n}{3}+1\right)^{-1}\,t^{\frac{n}{3}+1}.
$$
Let $B_h(z,R)$ denote the disk centered at $z$ with radius $R$ under the metric $h$.

Consider the function $\lambda$ from Lemma \ref{lemma_centerestimate}. Since $r_i\to+\infty$ and $\lim_{R\to\infty}\lambda(R)=0$, we can pick a big enough $i_0$ such that for all $i\geq i_0$ we have
$$
R(r_i)/2\geq R(r),\quad \lambda(R(r_i)/2)\leq\epsilon.
$$ 
Note that the first inequality means $B:=B_h(0,R(r_i)/2)$ contains $B(0,r)=B_h(0,R(r))$. 

For every $z\in\pa B$, we have 
$$
B_h(z,R(r_i)/2)\subset B_h(0,R(r_i))=B(0,r_i).
$$
The disk $B_h(z,R(r_i)/2)$, endowed with the metric $h$, is isometric to a Euclidean disk of radius $R(r_i)/2$ because it does no contain the singularity $0$ of $h$. So we can apply Lemma \ref{lemma_centerestimate}  to the metrics $g_i$ and $g_0^{(n)}$ on this disk, and conclude that 
$$
e^{-\lambda(R(r_i)/2)}\leq 2\|z^n\dz^3\|^2_{g_i}\leq 1,\quad e^{-\lambda(R(r_i)/2)}\leq 2\|z^n\dz^3\|^2_{g_0^{(n)}}\leq 1
$$
at every point $z$ on $\pa B$. Therefore, at such $z$, the conformal ratio
$$
\frac{g_i}{g_0^{(n)}}=\left(\frac{\|z^n\dz^3\|^2_{g_0^{(n)}}}{\|z^n\dz^3\|^2_{g_i}}\right)^\frac{1}{3}
$$
is bounded from above by $e^{\frac{1}{3}\lambda(R(r_i)/2)}\leq e^\frac{\epsilon}{3}$ and from below by $e^{-\frac{1}{3}\lambda(R(r_i)/2)}\geq e^{-\frac{\epsilon}{3}}$ for all $i\geq i_0$. This means the required inequality \eqref{eqn_gg0} holds on $\pa B$. But since $e^\epsilon g_0^{(n)}$ and $e^{-\epsilon} g_0^{(n)}$ are supersolution and subsolution to Wang's equation, respectively (see \S \ref{subsec_estimates}), we can apply Lemma \ref{lemma_max} to the pairs of metrics $(g_-,g_+)=(e^{-\epsilon}g_0^{(n)},g_i)$ and $(g_i,e^{\epsilon}g_0^{(n)})$, respectively, and conclude that (\ref{eqn_gg0}) holds throughout $\overline{B}$, hence in particular in $B(0,r)$, as required.

Since $(g_i)$ is a solution to a specific elliptic PDE, the $C^0$-convergence that we just established can be improved to $C^2$-convergence by the following standard argument. Write $g_i=e^{u_i}|\dz|^2$ and $g_0^{(n)}=e^{u_0}|\dz|^2$, so that Wang's equation can be written as the following equation satisfied by $u_i\in C^\infty(B(0,r_i))$ and $u_0\in C^\infty(\mathbb{C})$:
$$
\Delta u=2(e^{u}-2|z|^{2n}e^{-2u}).
$$
Fix $p>2$, $\alpha\in(0,1)$. The $C^0$-convergence implies that the $L^p$-norm of $u_i$ on $B(0,r)$ is uniformly bounded, so the $L^p$ estimate for second order linear elliptic PDEs (\cite[Theorem 9.11]{gilbarg-trudinger}) implies that the $W^{2,p}$-norm (on any smaller disk) is uniformly bounded, hence so is the $C^1$-norm by Sobolev embedding. The Schauder estimate (\cite[Theorem 6.2]{gilbarg-trudinger}) then implies that the $C^{2,\alpha}$-norm of $u_i$ is also uniformly bounded. This allows us to apply Arzel\`a-Ascoli and conclude that any subsequence of $(u_i)$ has a further subsequence $C^2$-converging to $u$, which means the sequence $(u_i)$ itself $C^2$-converges to $u_0$.
\end{proof}

\subsection{Continuous dependence of ODE system on parameters}\label{subsec_ode}
The ODE result that we need in order to deduce Proposition \ref{prop_developingconvergence} from Proposition \ref{prop_metricconvergence} is:
\begin{lemma}\label{lemma_ode}
Let $T>0$, $U$ be a set, $\big(A_i(t,\mu)\big)_{i=1,2,\cdots}$ be a sequence of bounded $\mathfrak{gl}_n\mathbb{R}$-valued functions on $[0,T]\times U$ uniformly converging to a function $A_\infty(t,\mu)$, and $Y_j$ ($j\in\mathbb{N}\cup\{\infty\}$) be a $\GL(n,\mathbb{R})$-valued function on $[0,T]\times U$ satisfying
$$
\frac{\pa}{\pa t}Y_j(t,\mu)+A_j(t,\mu)Y_j(t,\mu)=0,\quad Y_j(0,\mu)=I.
$$
Then $Y_i$ converges to $Y_\infty$ uniformly on $[0,T]\times U$ as $i\to\infty$.
\end{lemma}
This should be viewed as a continuous parameter-dependence property for solutions to linear ODE systems. The proof is standard and is given below for the sake of completeness. The main ingredient is the following classical lemma:
\begin{lemma}[{\textbf{Gronwall Inequality}. See \cite[p.24]{hartman}}]\label{lemma_gronwall}
	Let $x(t)$ be a continuous nonnegative function on $[0,T]$ satisfying
	$x(t)\leq C_1+C_2\int_0^tx(s)\dif s$ for all $t\in[0,T]$, where  $C_1,C_2>0$ are constants. Then $x(t)\leq C_1e^{C_2t}$ for all $t$.
\end{lemma}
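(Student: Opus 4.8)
The plan is to prove the bound by the classical integrating-factor method. I would introduce the auxiliary function
$$
G(t):=C_1+C_2\int_0^t x(s)\,\dif s,
$$
so that the hypothesis reads precisely $x(t)\leq G(t)$ for all $t\in[0,T]$. Since $x$ is continuous, the fundamental theorem of calculus shows $G$ is continuously differentiable on $[0,T]$ with $G(0)=C_1$ and
$$
G'(t)=C_2\,x(t)\leq C_2\,G(t),
$$
the inequality being exactly the hypothesis $x\leq G$. The point of passing to $G$ is to convert the integral inequality into a pointwise differential inequality.

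Next I would linearize using the integrating factor $e^{-C_2 t}$. The differential inequality gives
$$
\frac{\dif}{\dif t}\big(e^{-C_2 t}G(t)\big)=e^{-C_2 t}\big(G'(t)-C_2 G(t)\big)\leq 0,
$$
so $t\mapsto e^{-C_2 t}G(t)$ is non-increasing on $[0,T]$. Evaluating at $t=0$ yields $e^{-C_2 t}G(t)\leq G(0)=C_1$, i.e. $G(t)\leq C_1 e^{C_2 t}$. Combining with $x(t)\leq G(t)$ gives the desired conclusion $x(t)\leq C_1 e^{C_2 t}$.

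For this classical inequality there is no genuine obstacle; the only place requiring care is the logical flow, since the bound $x\leq G$ is used twice — once to turn the integral inequality for $G$ into the differential inequality $G'\leq C_2 G$, and once at the end to transfer the exponential bound from $G$ back to $x$. Should one wish to avoid differentiating $G$, an equally elementary alternative is to iterate the integral inequality, substituting the bound for $x$ into itself repeatedly: after $N$ steps this produces the partial sum $C_1\sum_{k=0}^{N}(C_2 t)^k/k!$ together with a remainder bounded by $\big(\sup_{[0,T]}x\big)(C_2 t)^{N+1}/(N+1)!$ (the volume of the order simplex), and letting $N\to\infty$ recovers $C_1 e^{C_2 t}$. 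Either route is short and self-contained.
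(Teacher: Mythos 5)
Your integrating-factor argument is correct, and it is the standard textbook proof: the paper itself states this lemma without proof, citing Hartman, and the argument there is essentially the same one you give (pass to $G(t)=C_1+C_2\int_0^t x(s)\,\dif s$, note $G'=C_2x\leq C_2G$, and conclude via the monotonicity of $e^{-C_2t}G(t)$). No gaps; nothing further is needed.
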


\begin{proof}[Proof of Lemma \ref{lemma_ode}]
Given a norm $\|\cdot\|$ on $\mathfrak{gl}_n\mathbb{R}$ with $\|AB\|\leq \|A\|\cdot\|B\|$, we have 
\begin{align*}
&\|Y_i(t,\mu)-Y_\infty(t,\mu)\|=\left\|\int_0^t\left[A_\infty(s,\mu)Y_\infty(s,\mu)-A_i(s,\mu)Y_i(s,\mu)\right]\dif s\right\|\\
&\leq \int_0^t\left\|A_\infty Y_\infty-A_iY_i\right\|\dif s \leq \int_0^t\left\|A_\infty Y_\infty-A_iY_\infty\right\|\dif s+\int_0^t\left\|A_iY_\infty-A_iY_i\right\|\dif s\\
&\leq \int_0^t\|A_\infty-A_i\|\cdot\|Y_\infty\|\dif s+\int_0^t\|A_i\|\cdot\|Y_i-Y_\infty\|\dif s
\end{align*}
(where every function in the second and third lines is evaluated at $(s,\mu)$). 
If $\|Y_\infty\|$ is bounded on $[0,T]\times U$, we can apply Lemma \ref{lemma_gronwall} to get
$$
\|Y_i(t,\mu)-Y_\infty(t,\mu)\|\leq C\sup_{[0,T]\times U}\|A_i-A_\infty\|,
$$
which implies the required uniform convergence. But the boundedness of $\|Y_\infty\|$ itself also follows from Lemma \ref{lemma_gronwall}  because
\begin{align*}
	\|Y_\infty(t,\mu)\|&\leq \|I\|+\|Y_\infty(t,\mu)-I\|= 1+\left\|\int_0^tA_\infty(s,\mu)Y_\infty(s,\mu)\dif s\right\|\\
	&\leq 1+\int_0^t\|A_\infty\|\cdot \|Y_\infty\|\dif s\leq 1+\sup_{[0,T]\times U}\|A_\infty\|\cdot \int_0^t\|Y_\infty\|\dif s.
\end{align*}
\end{proof}
\subsection{Proof of Proposition \ref{prop_developingconvergence}}\label{subsec_poof}
\begin{proof}
Denote $\Omega_i:=\P(f(X))$, $\Omega:=\P(f(\mathbb{C}))$ and
consider their dual domains $\Omega_i^*,\Omega^*\subset\mathbb{RP}^{2*}$ (see \S \ref{subsec_duality}).
The required convergence $\Omega_i\to \Omega$ is equivalent to the following condition:
\begin{equation}\label{eqn_convergencecondition}
\text{$\forall$ compact $C\subset\Omega$, $C'\subset\Omega^*$, $\exists i_0\in\mathbb{N}$ such that $i\geq i_0$ $\Rightarrow$ $C\subset \Omega_i$, $C'\subset\Omega_i^*$.} 
\end{equation}

Let $f^*$ be the affine spherical immersion dual to $f$ (see Proposition \ref{prop_wangdeveloping}), and let $\P f:\mathbb{C}\to\RP$ and $\P f^*:\mathbb{C}\to\mathbb{RP}^{2*}$ be the projectivizations of $f$ and $f^*$, respectively. To prove \eqref{eqn_convergencecondition}, we fix compact subsets $C\subset\Omega=\P f(\mathbb{C})$, $C'\subset\Omega^*=\P f^*(\mathbb{C})$ and take a big enough disk $B(0,r)\subset\mathbb{C}$ such that its images by $\P f$ and $\P f^*$ contains $C$ and $C'$, respectively. Viewing the restrictions $f_i|_{U_i}$ and $f_i^*|_{U_i}$ as maps from $B(0,r_i)$ to $\R^3$ and $\R^{3*}$, respectively, via the identification (\ref{eqn_diskneighborhood}) of $U_i$ with $B(0,r_i)$, we claim that they converge uniformly on the closed disk $\overline{B}(0,r)$ to $f$ and $f^*$, respectively, as $i\to\infty$.

The claim follows from Proposition \ref{prop_metricconvergence} and Lemma \ref{lemma_ode}. Here we give a detailed argument only for $f_i|_{U_i}$, as the case of $f^*_i|_{U_i}$ is the similar. Under the coordinate $z=x+\ima y$ of $B(0,r)$, we write $g_i=e^{u_i}|\dz|^2$, $g_0^{(n)}=e^u|\dz|^2$ and view the affine sphere connections $\nabla_i:=\nabla_{\ve{\psi}_i}$ and $\nabla:=\nabla_{z^n\dz^3}$ (see \S \ref{subsec_wang}) locally as flat connections on the vector bundle $\T B(0,r)\oplus\underline{\R}$. They are expressed under the frame $(\pa_x,\pa_y,\underline{1})$ as
$\nabla_i=d+A_i$ and $\nabla=d+A$, where $A_i$ (resp. $A$) is a matrix of $1$-forms whose coefficients are linear combinations of $e^{\pm u_i}$ (resp $e^u$) and the first order derivatives of $u_i$ (resp. $u$). Therefore, the $C^1$-convergence $g_i\to g_0^{(n)}$ from Proposition \ref{prop_metricconvergence} implies the uniform convergence
\begin{equation}\label{eqn_aconvergence}
A_i\to A \text{ on $B(0,r)$ as }i\to\infty.
\end{equation}

Given $z\in\overline{B}(0,r)$, consider the parallel transport  $\para_i(z)$ of $\nabla_i$ from $z$ to $0$, viewed as a matrix in $\SL(3,\R)$ under the frame $(\pa_x,\pa_y,\underline{1})$. Also let $\para(z)$ be the same parallel transport of $\nabla$. Then $f_i$ and $f$ have coordinate expressions (see Definition \ref{def_wangdeveloping})
$$
f_i(z)=\para_i(z)
\begin{pmatrix}
0\\[-3pt]0\\[-3pt]1
\end{pmatrix},\quad
f(z)=\para(z)
\begin{pmatrix}
0\\[-3pt]0\\[-3pt]1
\end{pmatrix}\quad\text{for all $z\in\overline{B}(0,r)$.}
$$
To prove the claim, we view $\para_i(z)$ and $\para(z)$ as solutions of parametrized linear ODE systems. Namely, by definition of parallel transport, for all $t\in[0,r]$ and $\theta\in[0,2\pi]$ we have
$$
\frac{\dif}{\dif t}\para_i(te^{\theta\ima})=\para_i(te^{\theta\ima})A_i(\pa_{t}(te^{\theta\ima})),\quad \frac{\dif}{\dif t}\para(te^{\theta\ima})=\para(te^{\theta\ima})A(\pa_{t}(te^{\theta\ima})).
$$ 
Here, the matrix valued $1$-forms $A_i$ and $A$ are evaluated at each tangent vector $\pa_{t}(te^{\theta\ima})=e^{\theta\ima}\in\T_{te^{\theta\ima}}B(0,r)$, yielding matrix-valued functions $(t,\theta)\mapsto A_i(\pa_{t}(te^{\theta\ima}))$ and $(t,\theta)\mapsto A(\pa_{t}(te^{\theta\ima}))$  on $[0,r]\times[0,2\pi]$. By (\ref{eqn_aconvergence}), the former converges uniformly to the latter, so we can apply Lemma \ref{lemma_ode} and conclude that $\para_i(z)$ converges to $\para(z)$ uniformly for $z\in\overline{B}(0,r)$, whence the claim follows.

To deduce the required property \eqref{eqn_convergencecondition} from the claim, we need  two basic facts:
\begin{itemize}
	\item Wang's developing maps $f_i$ and $f$ take values in a closed affine half-space of $\R^3$ not containing $0$. This is explained at the end of \S \ref{subsec_wang}.
	\item If we fix a Euclidean metric on $\R^3$ and endow $\RP$ with the metric as the antipodal quotient of the unit sphere $\mathbb{S}^2\subset\mathbb{R}^3$, then the projection $\P:\mathbb{R}^3\setminus\{0\}\to\RP$ is Lipschitz when restricted to the complement of a neighborhood of $0$. In particular, $\P$ is Lipschitz on the aforementioned half-space of $\R^3$ where $f_i$ and $f$ take values.
\end{itemize}
With these facts, we deduce from the claim that $\P f_i$ converges to $\P f$ uniformly on $\overline{B}(0,r)$ with respect to the metric on $\RP$. Therefore, $\P f_i(\overline{B}(0,r))$ contains the compact set $C\subset \P f(B(0,r))$ when $i$ is big enough, and the same holds for $\P f_i^*(\overline{B}(0,r))$ and $C'$. This implies the required property \eqref{eqn_convergencecondition} and completes the proof.
\end{proof}

\section{Limit domains are regular polygons}\label{sec_limitconvex}
We prove in this section the ``only if'' part of Theorem \ref{thm_intro2}.
\subsection{Pointed Gromov-Hausdorff convergence of Blaschke metric}\label{subsec_pointedgh}
The Blaschke metric has the following fundamental continuity property:
\begin{lemma}[Benoist-Hulin]\label{lemma_benoisthulin}
	Let $(\Omega_i)_{i=1,2,\cdots}$ be a sequence in $\hC$ converging to some $\Omega\in\hC$ and $K\subset\RP$ be a compact set contained in $\Omega$ and every $\Omega_i$. Then the Blaschke metric $g_{\Omega_i}$ and Pick differential $\ve{\psi}_{\Omega_i}$ converge uniformly on $K$ to $g_\Omega$ and $\ve{\psi}_\Omega$, respectively.
\end{lemma}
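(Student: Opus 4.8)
The plan is to reduce the statement to $C^\infty_{\mathrm{loc}}$-convergence of the affine spheres $\Sigma_{\Omega_i}$ themselves, since $g_{\Omega_i}$ and $\ve{\psi}_{\Omega_i}$ are obtained from the affine immersion by differentiation and are given by universal local expressions in a defining convex function up to third order (the metric at second order, the cubic form at third). The central tool is the Cheng--Yau realization of each complete hyperbolic affine sphere through a Monge--Amp\`ere equation whose only data is the convex domain, combined with uniform interior estimates and the uniqueness half of the Cheng--Yau theorem.

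First I would fix an affine chart $\R^2\subset\RP$ containing $K$ together with the closures of $\Omega$ and of all but finitely many $\Omega_i$; this is possible because $\Omega_i\to\Omega$ in Hausdorff distance and $\overline\Omega$ is compact in some chart. In this chart I realize each $\Sigma_{\Omega_i}$ through a convex function $u_i$ on $\Omega_i$ with $u_i<0$ inside, $u_i\to0$ at $\pa\Omega_i$, solving the affine-sphere Monge--Amp\`ere equation $\det D^2 u_i=(-u_i)^{-4}$ (the two-dimensional Cheng--Yau equation); the unique complete affine sphere corresponds to the unique convex solution with this boundary behaviour. Since $K\subset\Omega\cap\bigcap_{i\ge i_0}\Omega_i$ lies in the common chart, the push-forward tensors $g_{\Omega_i}$, $\ve{\psi}_{\Omega_i}$ all live on the same underlying neighbourhood of $K$, so the asserted convergence of tensor fields is meaningful and will follow from local convergence of the $u_i$.

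Next I would establish uniform control of $(u_i)$ near $K$. Hausdorff convergence guarantees that $K$ stays a definite distance from every $\pa\Omega_i$ for large $i$, and that there are fixed convex domains with $\Omega'\Subset\Omega\Subset\Omega''$ and $\Omega'\subset\Omega_i\subset\Omega''$ for all large $i$. Invoking the comparison principle for the Cheng--Yau equation (equivalently, the monotone dependence of the affine sphere on the cone), I would sandwich $u_i$ between the two fixed solutions $u_{\Omega''}$ and $u_{\Omega'}$, obtaining uniform two-sided $C^0$ bounds on $u_i$ over a neighbourhood of $K$. Interior Monge--Amp\`ere regularity (Caffarelli--Pogorelov-type estimates, or the estimates of Cheng--Yau and Loftin) then upgrades these to uniform $C^k$ bounds on a slightly smaller neighbourhood for every $k$. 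With these estimates, Arzel\`a--Ascoli shows every subsequence of $(u_i)$ has a further subsequence converging in $C^\infty_{\mathrm{loc}}$ on $\Omega$; the limit is a convex solution of the same equation on $\Omega$ with the correct boundary behaviour, so by uniqueness it equals $u_\Omega$. Hence $u_i\to u_\Omega$ in $C^\infty_{\mathrm{loc}}$, and feeding this into the universal formulas for $g$ and $\ve{\psi}$ yields the claimed uniform convergence on $K$.

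The main obstacle is the uniform interior regularity step. The right-hand side $(-u_i)^{-4}$ blows up as one approaches $\pa\Omega_i$, so the constants in the interior estimates must be controlled using only the lower bound on the distance from $K$ to $\pa\Omega_i$ and the two-sided barrier bounds, rather than any global quantity. Verifying that these constants are genuinely uniform in $i$ --- which is exactly where the barrier construction, the comparison principle, and the affine-invariance of the Monge--Amp\`ere estimates are indispensable --- is the crux; once it is secured, the compactness-and-uniqueness argument is routine.
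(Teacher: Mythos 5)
Your proposal is correct and takes essentially the same route as the paper's source for this lemma: the paper gives no proof of its own, noting only that the statement is deduced from compact $C^k$-convergence of the affine spheres $\Sigma_{\Omega_i}$ to $\Sigma_\Omega$ and citing \cite[Corollary 3.3]{benoist-hulin}, where that convergence is established exactly as you outline --- via the Cheng--Yau Monge--Amp\`ere characterization of the affine sphere, barriers coming from the Hausdorff convergence of the domains, uniform interior estimates, and the compactness-plus-uniqueness argument. The only point worth flagging is that the uniform interior regularity step you identify as the crux is indeed where the real work lies, and it is carried out in the cited reference.
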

The original result claims $C^k$-convergence in $K$, and is deduced from the fact that the affine sphere $\Sigma_{\Omega_i}$ (see \S \ref{subsec_affinesphere}) compactly $C^k$-converges to $\Sigma_\Omega$. See \cite[Corollary 3.3]{benoist-hulin} and \cite[Theorem 4.4]{dumas-wolf} for details. In this section we only make use of the statement for $g_{\Omega_i}$, while the one for $\ve{\psi}_{\Omega_i}$ is used in \S \ref{subsec_at} below.

A \emph{pointed metric space} $(M,d,p)$ consists of a metric space $(M,d)$ and a choice of a base point $p\in M$, while the \emph{Gromov-Hausdorff convergence} $(M_i,d_i,p_i)\to (M,d,p)$ of pointed metric spaces rough means that any metric ball in $M$ centered at $p$ is approximated by metric balls in $M_i$ centered at $p_i$. More precisely:

\begin{definition}[{\cite[Definition 8.1.1]{bbi}}]\label{def_gh}
A sequence $(M_i,d_i,p_i)_{i=1,2,\cdots}$ of pointed metric spaces is said to \emph{converge} to a pointed metric space $(M,d,p)$ if given any $R>0$ and $\epsilon>0$, there is $i_0\in\mathbb{N}$ such that for every $i\geq i_0$, there exists a (not necessarily continuous) map 
$f_i$ from the ball $B_{d_i}(p_i,R)\subset M_i$ to $M$, satisfying:
\begin{itemize}
	\item $f_i(p_i)=p$;
	\item $\sup_{x,y\in B_{d_i}(p_i,R)}|d_i(x,y)-d(f_i(x),f_i(y))|<\epsilon$;
	\item the $\epsilon$-neighborhood of the image $f_i(B_{d_i}(p_i,R))$ contains $B_d(p,R-\epsilon)$.
\end{itemize}
\end{definition}
\begin{remark}\label{remark_gh}
We mainly use the following sufficient condition for the convergence $(M_i,d_i,p_i)\to (M,d,p)$, when $M_i$, $M$ are manifolds and $d_i$, $d$ are given by Riemannian metrics $g_i$, $g$ such that every pair of points are joint by a geodesic and $g$ is complete: For each $i$, there is a neighborhoods $U_i$ of $p_i$ in $M_i$, a neighborhood $V_i$ of $p$ in $M$ and a diffeomorphism $f_i:U_i\overset\sim\to V_i$, such that 
\begin{itemize}
	\item $V_i$ contains a metric ball centered at $p$ with radius tending to $\infty$ as $i\to\infty$;
	\item $d(p, f_i(p_i))\to 0$ as $i\to\infty$;
	\item the push-forward metric $f_i^*g_i|_{U_i}$ on $V_i$ converges to $g$ uniformly on any compact subset of $M$ (which is contained in $V_i$ when $i$ is big enough).
\end{itemize}
We leave to the reader the proof of the fact that this condition implies the convergence, only mentioning here that a key point in the proof is to show $U_i$ also contains a metric ball centered at $p_i$ with radius tending to $\infty$, which relies on the completeness of $(M,d)$.
\end{remark}
Given a sequence $(\Omega_i)$ in $\C$ converging to $\Omega$, taking an exhausting sequence of open sets $V_1\subset V_2\subset\cdots \subset\Omega$ such that the closure of $V_i$ is contained in both $\Omega_i$ and $\Omega$, we deduce From Lemma \ref{lemma_benoisthulin} and the above remark:
\begin{corollary}\label{coro_gh}
	Let $(\Omega_i)_{i=1,2,\cdots}$ be a sequence in $\C$ converging to some $\Omega\in\C$, and $p\in\RP$ be a point contained in $\Omega$ and every $\Omega_i$. Then $(\Omega_i,g_{\Omega_i}, p)$ converges to $(\Omega,g_\Omega,p)$.
\end{corollary}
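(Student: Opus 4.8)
The plan is to verify the sufficient condition for pointed Gromov--Hausdorff convergence formulated in Remark \ref{remark_gh}, applied with $M_i=\Omega_i$, $M=\Omega$ and the common base point $p_i=p$ for every $i$. The key simplification is that the comparison maps $f_i$ can be taken to be \emph{identity maps} on suitable neighborhoods of $p$, so that each of the three bullet points in the remark reduces to something already in hand. Although Lemma \ref{lemma_benoisthulin} is phrased for oriented domains in $\hC$, it applies here after lifting the convergent sequence in $\C$ to $\hC$ by choosing compatible orientations, because the Blaschke metric is orientation-independent ($g_{-\Omega}=g_\Omega$, see \S\ref{subsec_blaschke}).

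First I would record the two completeness facts that make the remark applicable. By \S\ref{subsec_blaschke}, $g_\Omega$ is the \emph{complete} solution to Wang's equation, hence by Hopf--Rinow it is geodesically complete and any two of its points are joined by a minimizing geodesic; each $g_{\Omega_i}$ is likewise complete and so enjoys the same geodesic-joining property, which is exactly the standing hypothesis of Remark \ref{remark_gh}. Completeness of $g_\Omega$ also guarantees that the closed balls $\overline{B}_{g_\Omega}(p,m)$ are compact and exhaust $\Omega$ as $m\to\infty$.

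Next I would construct the exhaustion demanded by the remark. Using the Hausdorff convergence $\Omega_i\to\Omega$, every compact subset $K$ of the open domain $\Omega$ is eventually contained in $\Omega_i$: such a $K$ has positive distance to $\partial\Omega$, and for convex domains a sufficiently small Hausdorff distance forces $K\subset\Omega_i$. Applying this to the compact balls $\overline{B}_{g_\Omega}(p,m)$ produces indices $N_1\le N_2\le\cdots$ with $\overline{B}_{g_\Omega}(p,m)\subset\Omega_i$ for all $i\ge N_m$. Setting $m(i):=\max\{m:N_m\le i\}\to\infty$ and $V_i:=B_{g_\Omega}(p,m(i))$, I obtain $\overline{V_i}\subset\Omega_i\cap\Omega$, an exhaustion of $\Omega$, and a $g_\Omega$-ball about $p$ of radius $m(i)\to\infty$ inside each $V_i$.

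Finally I would take $U_i=V_i$ regarded inside $\Omega_i$, with $f_i=\mathrm{id}:U_i\overset\sim\to V_i$, and check the remark's three conditions. The radius condition is immediate from the previous paragraph; the base-point condition is trivial since $d_{g_\Omega}(p,f_i(p))=0$; and the metric-convergence condition $f_i^*g_{\Omega_i}|_{U_i}=g_{\Omega_i}|_{V_i}\to g_\Omega$ uniformly on compacta is precisely Lemma \ref{lemma_benoisthulin}, since any compact $C\subset\Omega$ lies in both $\Omega_i$ and $V_i$ once $i$ is large. Remark \ref{remark_gh} then yields $(\Omega_i,g_{\Omega_i},p)\to(\Omega,g_\Omega,p)$. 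The only step requiring genuine care, as opposed to bookkeeping, is the containment claim that compact subsets of $\Omega$ eventually sit inside $\Omega_i$ -- this is where convexity and the definition of Hausdorff convergence are used -- while everything else is a direct transcription of Lemma \ref{lemma_benoisthulin} and Remark \ref{remark_gh}.
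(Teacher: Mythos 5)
Your proposal is correct and follows the same route as the paper: the paper's (one-sentence) proof likewise takes an exhausting sequence of open sets $V_1\subset V_2\subset\cdots\subset\Omega$ with $\overline{V_i}\subset\Omega_i\cap\Omega$ and then invokes Lemma \ref{lemma_benoisthulin} together with the sufficient condition of Remark \ref{remark_gh}. You merely spell out details the paper leaves implicit (the construction of the exhaustion from Hausdorff convergence, taking $f_i=\mathrm{id}$, and the orientation lift from $\C$ to $\hC$), all of which are consistent with the intended argument.
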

This gives a necessarily condition for the convergence $\Omega_i\to\Omega$ in the space $\C$, which we use to prove the ``only if'' part of Theorem \ref{thm_intro2} in the next subsection. 
\begin{remark}
The completeness of $g_\Omega$ plays an important role in Corollary \ref{coro_gh}. Namely, the statement would not hold if we replace $g_{\Omega_i}$ and $g_\Omega$ by the singular flat metrics $|\ve{\psi}_{\Omega_i}|^\frac{2}{3}$ and $|\ve{\psi}_{\Omega}|^\frac{2}{3}$, which are not necessarily complete.
For example, let $\Omega$ be such that the metric $|\ve{\psi}_\Omega|^\frac{2}{3}$ has bounded diameter and $(\Omega_i)$ be a sequence of polygons approximating $\Omega$, then $(\Omega_i,\ve{\psi}_{\Omega_i})\cong (\mathbb{C},\psi_i(z)\dz^3)$ for some polynomial $\psi_i(z)$ by \cite{dumas-wolf} and $(\Omega_i,|\ve{\psi}_{\Omega_i}|^\frac{2}{3}, p)$ cannot converge to $(\Omega,|\ve{\psi}_\Omega|^\frac{2}{3},p)$.
\end{remark}

\subsection{Proof of the ``only if'' part of Theorem \ref{thm_intro2}}
A fundamental property of the above notion of convergence for pointed metric spaces is that the limit is unique in the category of pointed complete length spaces (see \cite[Theorem 8.1.7]{bbi}). As a consequence, if there are two subsequences converging to  pointed complete length spaces not isometric to each other, then the original sequence does not converge.  
Using this fact and Proposition \ref{prop_metricconvergence}, we determine all possible limits of $(\Omega_i,g_{\Omega_i},p_i)$ for a sequence $(\Omega_i)$ in $\C$ satisfying the assumptions of Theorem \ref{thm_intro2}:
\begin{proposition}\label{prop_gh}
	For $i=1,2,\cdots$, let $X_i$ be a contractible Riemann surface endowed with a holomorphic cubic differential $\ve{\psi}_i$. Denote the flat metric underlying  $\ve{\psi}_i$ by $h_i:=|\ve{\psi}_i|^\frac{2}{3}$ and assume that the conditions in Theorem \ref{thm_intro2} are fulfilled, namely, $h_i$	is complete, the minimal distance (with respect to $h_i$) between the zeros of $\ve{\psi}_i$ tends to $+\infty$ as $i\to\infty$, and the multiplicities of zeros are uniformly bounded. Let $g_i$ denote the complete solution to Wang's equation on $(X_i,\ve{\psi}_i)$ (see \S \ref{subsec_blaschke}) and pick a point $p_i\in X_i$ for each $i$. 
	\begin{enumerate}
		\item\label{item_gh1} If $d_{h_i}(p_i,Z(\ve{\psi}_i))\to+\infty$ as $i\to\infty$ (where $d_{h_i}$ is the distance induced by $h_i$ and $Z(\ve{\psi}_i)$ is the set of zeros of $\ve{\psi}_i$), then $(X_i,g_i,p_i)$ converges to $(\mathbb{C},|dz|^2,0)$.
		\item\label{item_gh2} If there is a positive integer $n$ such that for every $i$ apart from finitely many exceptions, there is a zero $z_i\in Z(\ve{\psi}_i)$ of multiplicity $n$ satisfying
		$$
		d_{h_i}(p_i,z_i)\to R_0\in[0,+\infty)\ \text{ as }i\to\infty,
		$$
		then $(X_i,g_i,p_i)$ converges to $(\mathbb{C},g_0^{(n)},x_0)$, where $x_0$ is the point in $[0,+\infty)\subset\mathbb{C}$ with distance $R_0$ from the origin $0\in\mathbb{C}$, with respect to the flat metric $|z|^\frac{2n}{3}|\dz|^2$ underlying the cubic differential $z^n\dz^3$.
		\item\label{item_gh3} Otherwise, $(X_i,g_i,p_i)$ does not converge.
	\end{enumerate}
\end{proposition}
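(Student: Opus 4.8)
The plan is to reduce the first two parts to Proposition \ref{prop_metricconvergence} by exhibiting a large embedded flat disk around a well-chosen point, and then to deduce the third part from the first two together with the uniqueness of pointed Gromov--Hausdorff limits. For part \eqref{item_gh1} I would center the analysis at $q_i:=p_i$ with $n:=0$: exactly as in the proof of the ``if'' part of Theorem \ref{thm_intro2}, completeness of $h_i$ together with $d_{h_i}(p_i,Z(\ve{\psi}_i))\to\infty$ yields an isomorphism \eqref{eqn_diskneighborhood} of an $h_i$-ball around $p_i$ with $(B(0,r_i),\dz^3)$, $r_i\to\infty$, sending $p_i$ to $0$. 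For part \eqref{item_gh2} I would take $q_i:=z_i$: since the minimal distance between zeros tends to $\infty$, the zero $z_i$ is the only singularity of $h_i$ in its $h_i$-ball of radius $r_i\to\infty$, so that ball develops isometrically onto $(B(0,r_i),z^n\dz^3)$, sending $z_i$ to $0$ and $p_i$ to a point at flat distance $d_{h_i}(p_i,z_i)\to R_0$. In either case the complete solution $g_i$ restricts to a nonpositively curved solution of Wang's equation on the disk (the complete solution is nonpositively curved, \S \ref{subsec_blaschke}), so Proposition \ref{prop_metricconvergence} gives $g_i\to g_0^{(n)}$ in $C^2$ on compacta.

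I would then upgrade this to pointed Gromov--Hausdorff convergence through Remark \ref{remark_gh}, taking $U_i$ to be the embedded disk, $V_i:=B(0,r_i)\subset\mathbb{C}$ and $f_i$ the developing identification: the push-forward $f_{i*}g_i$ converges to $g_0^{(n)}$ uniformly on compacta, $V_i$ contains $g_0^{(n)}$-balls of radius $\to\infty$ because $r_i\to\infty$ and $g_0^{(n)}$ is complete, and $f_i(p_i)$ converges to the limiting base point. For \eqref{item_gh1} the complete solution $g_0^{(0)}$ on $(\mathbb{C},\dz^3)$ is the flat one, since $\|\ve{\psi}\|^2_{g_0^{(0)}}\equiv\tfrac12$ forces $g_0^{(0)}$ to be a constant multiple of $|\dz|^2$; its pointed space is thus isometric to $(\mathbb{C},|\dz|^2,0)$, proving \eqref{item_gh1}, and \eqref{item_gh2} follows identically with base point $x_0$ at distance $R_0$ from the origin.

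For part \eqref{item_gh3} I argue by contraposition: assuming $(X_i,g_i,p_i)$ converges, I show that \eqref{item_gh1} or \eqref{item_gh2} must hold. By \cite[Theorem 8.1.7]{bbi} the limit is a unique pointed complete length space, shared by every subsequence. Since multiplicities are bounded and $g_i$ is nonpositively curved, every subsequence admits a further subsequence along which either $d_{h_i}(p_i,Z(\ve{\psi}_i))\to\infty$, with limit the flat $(\mathbb{C},|\dz|^2,0)$ by the above, or the nearest zero has a fixed multiplicity $n\geq1$ at distance $\to R_0\in[0,\infty)$, with limit $(\mathbb{C},g_0^{(n)},x_0)$ where $R_0=d_{g_0^{(n)}}(x_0,0)$. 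The decisive point is that these candidate limits are pairwise non-isometric: by Wang's equation \eqref{eqn_wang} one has $\kappa_{g_0^{(n)}}+1=2\|\ve{\psi}\|^2_{g_0^{(n)}}\geq0$ with equality only at the origin (the sole zero of $z^n\dz^3$), where $\kappa_{g_0^{(n)}}+1$ vanishes to order $2n$. Hence the flat model is singled out by $\kappa\equiv0$, the integer $n$ is recovered under any isometry as half the vanishing order of $\kappa+1$ at the unique point where $\kappa=-1$, and $R_0$ is the distance from the base point to that point. The common limit is therefore of exactly one type, and a short dichotomy argument forces either $d_{h_i}(p_i,Z(\ve{\psi}_i))\to\infty$, i.e. \eqref{item_gh1}, or, for all but finitely many $i$, a nearest zero of multiplicity $n$ with $d_{h_i}(p_i,z_i)\to R_0$, i.e. \eqref{item_gh2}.

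I expect the main obstacle to be precisely this non-isometry classification in \eqref{item_gh3}: one must verify that $n$ and $R_0$ are genuine metric invariants of the pointed limit, using that $g_0^{(n)}$ is smooth and non-degenerate at the origin, so that $\kappa_{g_0^{(n)}}+1\sim c\,s^{2n}$ in the $g_0^{(n)}$-geodesic distance $s$ from the origin, and that this origin is the only point with $\kappa_{g_0^{(n)}}=-1$. The flat-disk embeddings, while routine given completeness of $h_i$ and divergence of the inter-zero distances, also rest on the observation that the cone angle $\tfrac{2\pi(n+3)}{3}\geq2\pi$ at a zero makes $h_i$ nonpositively curved, so the relevant ball indeed develops isometrically onto the model.
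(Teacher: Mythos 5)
Your proof is correct and follows the paper's argument almost step for step: parts (\ref{item_gh1}) and (\ref{item_gh2}) are reduced to Proposition \ref{prop_metricconvergence} via the same flat-disk identifications $(U_i,\ve{\psi}_i,p_i)\cong(B(0,r_i),z^n\dz^3,x_i)$ and the sufficient condition of Remark \ref{remark_gh}, and part (\ref{item_gh3}) is the same subsequence-extraction argument combined with the uniqueness of pointed Gromov--Hausdorff limits among complete length spaces. The only substantive divergence is how you rule out isometries between the model limits $(\mathbb{C},g_0^{(n)},x_0)$: you use the intrinsic invariant $\kappa_{g_0^{(n)}}+1=2\|z^n\dz^3\|^2_{g_0^{(n)}}$, which is positive away from the origin and vanishes there to order $2n$ in the geodesic distance, so that $n$ and $R_0$ are read off from the curvature function alone; the paper instead invokes the uniqueness of complete solutions to Wang's equation to show that any orientation-preserving isometry of $(\mathbb{C},g_0^{(n)})$ must preserve $z^n\dz^3$ up to a unimodular constant and hence be a rotation fixing the origin. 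Both arguments are valid; yours is slightly more self-contained and purely metric, while the paper's pins down the full isometry group and reuses machinery already in place. (Minor quibble: your justification that $g_0^{(0)}$ is flat is phrased circularly --- the clean statement is that a suitable constant multiple of $|\dz|^2$ solves Wang's equation for $\dz^3$ and is complete, so it is the complete solution by uniqueness.)
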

\begin{proof}

Under the assumption of Part (\ref{item_gh1}), there exist $r_i>0$ and a neighborhood $U_i$ of $p_i$ for each $i$ such that $r_i\to+\infty$  and  
$$
(U_i,\ve{\psi}_i,p_i)\cong(B(0,r_i),\,\dz^3,0).
$$ 
Similarly, under the assumption of (\ref{item_gh2}) there are $r_i\to+\infty$, $U_i$ and $x_i\in[0,r_i)\subset B(0,r_i)$ converging to $x_0$, such  that
$$
(U_i,\ve{\psi}_i,p_i)\cong(B(0,r_i),\,z^n\dz^3,x_i).
$$
Therefore, Proposition \ref{prop_metricconvergence} and the sufficient condition for pointed Gromov-Hausdorff convergence in Remark \ref{remark_gh} imply the required statements (\ref{item_gh1}) and (\ref{item_gh2}).

We proceed to prove Part (\ref{item_gh3}). Since the multiplicities of zeros are uniformly bounded, 
	 if the distance $d_{h_i}(p_i,Z(\ve{\psi}_i))$ is unbounded but does not tends to $+\infty$, then there are two subsequence $(X_i, \ve{\psi}_i, p_i)_{i\in I}$ and $(X_i, \ve{\psi}_i, p_i)_{i\in J}$ satisfying the hypotheses of (\ref{item_gh1}) and (\ref{item_gh2}), respectively. The aforementioned uniqueness property of limit then implies that the whole sequence $(X_i,g_i,p_i)_{i\in\mathbb{N}}$ does not converge, as its two subsequences have different limits by Statements (\ref{item_gh1}) and  (\ref{item_gh2}).

Similarly, if $d_{h_i}(p_i,Z(\ve{\psi}_i))$ is bounded but the hypothesis of (\ref{item_gh2}) is not satisfied, then there are two subsequences, both satisfying the hypothesis but either with different $n$ or $R_0$. But two pointed metric spaces of the form $(\mathbb{C},g_0^{(n)},x_0)$ with different $n$ or $x_0$ are not isometric to each other (this follows \eg from the uniqueness of solutions to Wang's equation (see \S \ref{subsec_blaschke}), which implies that an isometry $\mathrm{Isom}^+(\mathbb{C},g_0^{(n)})$ ($n\geq1$) must preserve the cubic differential $z^n\dz^3$ up to multiplication by some $\lambda\in\mathbb{C}$, $|\lambda|=1$, hence must be a rotation).
Therefore, $(X_i,g_i,p_i)$ does not converge, and the proof is completed.
\end{proof}
We can now finish the proof of Theorem \ref{thm_intro2}:
\begin{proof}[Proof of the ``only if'' part of Theorem \ref{thm_intro2}]
	Let $(\Omega_i)$ be a sequence in $\C$ satisfying the assumptions of the theorem and converging to some $\Omega\in\C$, and pick $p\in\Omega$. Restricting to a subsequence $(\Omega_i)_{i\geq i_0}$ if necessary, we may assume every $\Omega_i$ contains $p$, so that the pointed metric space $(\Omega_i,g_{\Omega_i},p)$ converges to $(\Omega,g_\Omega,p)$ by Corollary \ref{coro_gh}. By assumptions on $\ve{\psi}_{\Omega_i}$, Proposition \ref{prop_gh} implies that $(\Omega,g_\Omega)$ is isometric to $(\mathbb{C},|\dz|^2)$ or some $(\mathbb{C},g_0^{(n)})$ ($n\geq1$), and in the latter case every $\ve{\psi}_{\Omega_i}$, apart from at most finitely many exceptions, has a zero $z_i\in\Omega_i$ of order $n$. The uniqueness of complete solutions to Wang's equation (see \S \ref{subsec_blaschke}) then implies that $(\Omega,\ve{\psi}_{\Omega})$ is isomorphic to $(\mathbb{C},\dz^3)$ or $(\mathbb{C},z^n\dz^3)$. By \cite{dumas-wolf}, this means $\Omega$ is a triangle or a regular $(n+3)$-gon, completing the proof of the theorem.
\end{proof}

\section{Pick differentials of domains generated by reflections}\label{sec_tits}
In this section, we first review the construction of properly convex domains generated by projective deformations of hyperbolic triangular reflection groups, then we describe the Pick differentials of these domains and prove Theorem \ref{thm_intro3}.
\subsection{Projective reflection and Pick differential}
A projective transformation $\sigma\in\SL(3,\R)$ is called a \emph{reflection} if it is conjugate to
$$
\begin{pmatrix}
1&&\\[-3pt]
&-1&\\[-3pt]
&&-1
\end{pmatrix},
$$
or equivalently, $\sigma$ has order $2$ and pointwise fixes a projective line $L\subset\RP$ and a point $p\in\RP\setminus L$. 
We call $\sigma$ a reflection \emph{across} the line $L$.
 
A properly convex domain $\Omega\subset\RP$ preserved by $\sigma$ has two possible relative positions with respect to the $L$ and $p$: either $\Omega$ is a bounded convex domain in the affine chart $\RP\setminus L\cong \R^2$ and contains $p$, or $L$ passes through $\Omega$ and $p$ is not in $\Omega$. The action of $\sigma$ on $\Omega$ is orientation preserving and reversing in the former and latter case, respectively. We are mainly interest in the latter case, where $\sigma$ brings the Pick differential $\ve{\psi}_\Omega$ to its conjugate, as the following proposition shows. Here, $\ve{\psi}_\Omega$ is viewed as a tri-linear map $\T_p\Omega\times\T_p\Omega\times\T_p\Omega\to\mathbb{C}$ for each tangent space $\T_p\Omega$ and put $\ve{\psi}_\Omega(v):=\ve{\psi}_\Omega(v,v,v)$ for simplicity.
\begin{proposition}\label{prop_conjugate}
Let $\Omega\subset\RP$ be a properly convex domain preserved by a reflection $\sigma$ across a line $L$ passing through $\Omega$. Endow $\Omega$ with an orientation and let $\ve{\psi}_\Omega$ be its Pick differential. Then for any point $p\in\Omega$ and tangent vector $v\in\T_p\Omega$, we have 
$$
\ve{\psi}_\Omega(v)=\overline{\ve{\psi}_\Omega(\dif\sigma_p(v))}.
$$
In particular, $L$ is a \emph{real trajectory} of $\ve{\psi}_\Omega$ in the sense that  $\ve{\psi}_\Omega(v)\in\R$ for every tangent vector $v$ of $L$.
\end{proposition}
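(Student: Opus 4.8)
The plan is to derive the identity directly from the two equivariance properties of the Pick differential listed in \S\ref{subsec_affinesphere}: naturality under $\SL(3,\R)$, expressed by $a_*\ve{\psi}_\Omega=\ve{\psi}_{a(\Omega)}$, and the conjugation rule $\ve{\psi}_{-\Omega}=\overline{\ve{\psi}_\Omega}$ under orientation reversal. Since a reflection has determinant $(+1)(-1)(-1)=1$, we have $\sigma\in\SL(3,\R)$, so both properties are available.

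First I would pin down the oriented image $\sigma(\Omega)$. As $\sigma$ preserves the underlying domain in $\RP$, the oriented domain $\sigma(\Omega)$ has the same point set as $\Omega$; and since $L$ passes through $\Omega$, the discussion preceding the statement shows that $\sigma$ acts on $\Omega$ in an orientation-reversing way, so the pushed-forward orientation is the opposite one. Hence $\sigma(\Omega)=-\Omega$ in $\hC$. Rewriting the naturality relation as $\sigma^*\ve{\psi}_{\sigma(\Omega)}=\ve{\psi}_\Omega$ and evaluating at a point $p\in\Omega$ with $v\in\T_p\Omega$ then gives
$$
\ve{\psi}_{\sigma(\Omega)}\big(\dif\sigma_p(v)\big)=\ve{\psi}_\Omega(v),
$$
where $\dif\sigma_p(v)\in\T_{\sigma(p)}\Omega$ and we abbreviate $\ve{\psi}(w):=\ve{\psi}(w,w,w)$.

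Next I would substitute $\ve{\psi}_{\sigma(\Omega)}=\ve{\psi}_{-\Omega}=\overline{\ve{\psi}_\Omega}$. The one point needing care is how the conjugate cubic differential acts as a trilinear form on a real tangent vector $w$: writing $\ve{\psi}_\Omega=\psi(z)\,\dz^3$ in a local conformal coordinate gives $\overline{\ve{\psi}_\Omega}=\overline{\psi(z)}\,\dbz^3$, and since $\dbz(w)=\overline{\dz(w)}$ for real $w$, one gets $\overline{\ve{\psi}_\Omega}(w)=\overline{\ve{\psi}_\Omega(w)}$. Applying this with $w=\dif\sigma_p(v)$ turns the displayed identity into $\overline{\ve{\psi}_\Omega(\dif\sigma_p(v))}=\ve{\psi}_\Omega(v)$, which is the claimed formula.

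Finally, the \emph{real trajectory} assertion follows by specializing to $p\in L\cap\Omega$ (nonempty because $L$ meets $\Omega$) and $v$ tangent to $L$: as $\sigma$ fixes $L$ pointwise, its differential is the identity on $\T_pL$, so $\dif\sigma_p(v)=v$ and the formula collapses to $\ve{\psi}_\Omega(v)=\overline{\ve{\psi}_\Omega(v)}$, forcing $\ve{\psi}_\Omega(v)\in\R$. I expect no serious analytic difficulty here; the only real obstacle is bookkeeping, namely matching the abstract pushforward $a_*\ve{\psi}_\Omega=\ve{\psi}_{a(\Omega)}$ with its pointwise pullback form and correctly tracking that orientation reversal on the surface $\Omega$ is precisely what makes $\sigma(\Omega)=-\Omega$, so that the conjugation rule applies.
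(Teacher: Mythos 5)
Your argument is correct and follows essentially the same route as the paper: identify $\sigma:\Omega\to-\Omega$ as an orientation-preserving isomorphism and combine the $\SL(3,\R)$-equivariance $a_*\ve{\psi}_\Omega=\ve{\psi}_{a(\Omega)}$ with the conjugation rule $\ve{\psi}_{-\Omega}=\overline{\ve{\psi}_\Omega}$ to get $\ve{\psi}_\Omega=\sigma^*\overline{\ve{\psi}_\Omega}$. The extra bookkeeping you supply (the determinant check that $\sigma\in\SL(3,\R)$ and the evaluation of $\overline{\ve{\psi}_\Omega}$ on real tangent vectors) is exactly what the paper leaves implicit when it says the displayed identity is ``just another formulation of the required equality.''
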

\begin{proof}
Let $-\Omega$ denote $\Omega$ with the reversed orientation (\cf \S \ref{subsec_affinesphere}). Then $\sigma:\Omega\to -\Omega$ is an orientation preserving isomorphism. By the properties of Pick differentials discussed in \S \ref{subsec_affinesphere}, we have
$$
\ve{\psi}_\Omega=\sigma^*\ve{\psi}_{-\Omega}=\sigma^*\,\overline{\ve{\psi}_{\Omega}}~,
$$
which is just another formulation of the required equality.
\end{proof}

\subsection{$\frac{1}{3}$-translation surface with real boundary}\label{subsec_real}
Instead of viewing a cubic differential $\ve{\psi}$ as a holomorphic object on a Riemann surface $S$, we shall henceforth consider the pair $\ve{S}=(S,\ve{\psi})$, when $\ve{\psi}$ is nontrivial, as a \emph{$\frac{1}{3}$-translation surface} (with singularities), in the same way as how surfaces endowed with abelian or quadratic differentials are viewed as \emph{translation} or \emph{half-translation} surfaces. More precisely, around every ordinary points, $\ve{S}$ is locally modeled on $(\mathbb{C},\dz^3)$, hence in particular carries a flat metric, whereas each zero of multiplicity $n$ is a conical singularity for the flat metric, with cone angle 
$$
\theta=2\pi+\tfrac{2\pi}{3}n.
$$ 
It is important to note that every singularity has cone angle bigger than $2\pi$.


An advantage of this point of view is that the notion naturally extends to surfaces with boundary. 
More specifically, Proposition \ref{prop_conjugate} motivates us to consider $\frac{1}{3}$-translation surfaces with boundary obtained by cutting one without boundary along real trajectories. We define such surfaces precisely as follows:
 
\begin{definition}[\textbf{$\frac{1}{3}$-translation surface with real boundary}]\label{def_realboundary}
A $\frac{1}{3}$-translation surface with \emph{real boundary} is a $\frac{1}{3}$-translation surface $\ve{S}$ with boundary $\pa\ve{S}$, such that any local chart of $\ve{S}$ around a boundary point identifies $\pa\ve{S}$ locally with a piecewise line segment in $(\mathbb{C},\dz^3)$ which has locally finitely many pieces and each piece is a real trajectory (\ie parallel to one of the three lines $\R$, $e^{\frac{2\pi\ima}{3}}\R$ and $e^\frac{4\pi\ima}{3}\R$ in $\mathbb{C}$). A junction point $q\in\pa\ve{S}$ of two pieces is referred to as a \emph{corner} of $\ve{S}$.
\end{definition}
We will need a Gauss-Bonnet formula for such surfaces. Recall that the formula for a compact surface $S$ (with boundary) with a smooth Riemannian metric is
$$
\int_S\kappa\,\dif A+\int_{\pa S}\kappa_g\dif s=2\pi\chi(S),
$$
where $\chi(S)$ is the Euler characteristic, $\kappa:S\setminus\pa S\to\R$ and $\kappa_g:\pa S\to \R$ are the curvature of the interior and the geodesic curvature of the boundary, respectively, whereas $\dif A$ and $\dif s$ are the area and arc-length measures. If $S$ is a flat surface with conical singularities, although $\kappa$ is not well-defined as a function,
the curvature measure $\kappa\,\dif A$ is still naturally defined as the atomic measure supported at the singularities such that a singularity with cone angle $\theta$ contributes a mass of $2\pi-\theta$. In particular, for a $\frac{1}{3}$-translation surface $\ve{S}$, by the aforementioned relation between $\theta$ and multiplicity of zero, we can write
$$
\kappa\,\dif A=-\tfrac{2\pi}{3}\sum_{p\in Z(\ve{S})}n(\ve{S},p)\delta_p,
$$
where $Z(\ve{S})$ is the set of zeros (in the interior of $\ve{S}$), $\delta_p$ is the Dirac measure at $p$, and $n(\ve{S},p)$ is the multiplicity of the zero $p$. Analogously, when $\ve{S}$ has real boundary, we can make sense of the boundary geodesic curvature measure as
$$
\kappa_g\dif s=-\tfrac{\pi}{3}\sum_{q\in C(\ve{S})}l(\ve{S},q)\delta_q,
$$
where $C(\ve{S})\subset\pa\ve{S}$ is the set of corners, and the integer $l(\ve{S},q)$ is defined as follows:
\begin{definition}[\textbf{Index of a corner}]
Given a $\frac{1}{3}$-translation surface $\ve{S}$ with real boundary, for each corner $q\in\pa\ve{S}$, we let $\angle(\ve{S},q)$ denote the interior angle of $\ve{S}$ at $q$ and define the \emph{index} $l(\ve{S},q)\in\mathbb{Z}\cap[-2,+\infty)$ by the equality
$$
\angle(\ve{S},q)=\pi+\tfrac{\pi}{3}l(\ve{S},q).
$$
\end{definition}
Note that $l(\ve{S},q)$ is an integer and its smallest possible value is $-2$ because $\angle(\ve{S},q)$ is a positive integer multiple of $\frac{\pi}{3}$. We also have $l(\ve{S},q)\neq0$, because otherwise $q$ would not be a corner. Some examples of $\frac{1}{3}$-translation disks with real boundary are given in Figure \ref{figure_real}, where the indices of corners are marked.
\begin{figure}[h]
	\labellist
	\pinlabel {$-2$} at -15 65
	\pinlabel {$-2$} at 50 15
	\pinlabel {$-2$} at 110 280
	\pinlabel {$-2$} at 250 15
	\pinlabel {$-2$} at 365 60
	\pinlabel {$-2$} at 434 22
    \pinlabel {$-2$} at 645 22
	\pinlabel {$-2$} at 503 293
	\pinlabel {$-2$} at 500 159
	\pinlabel {$-2$} at 768 105
	\pinlabel {$-2$} at 910 368
	\pinlabel {$-2$} at 800 -13
	\pinlabel {$-2$} at 1033 -13
	\pinlabel {$2$} at 110 110
	\pinlabel {$4$} at 540 76
	\endlabellist
	\centering
	\includegraphics[width=10cm]{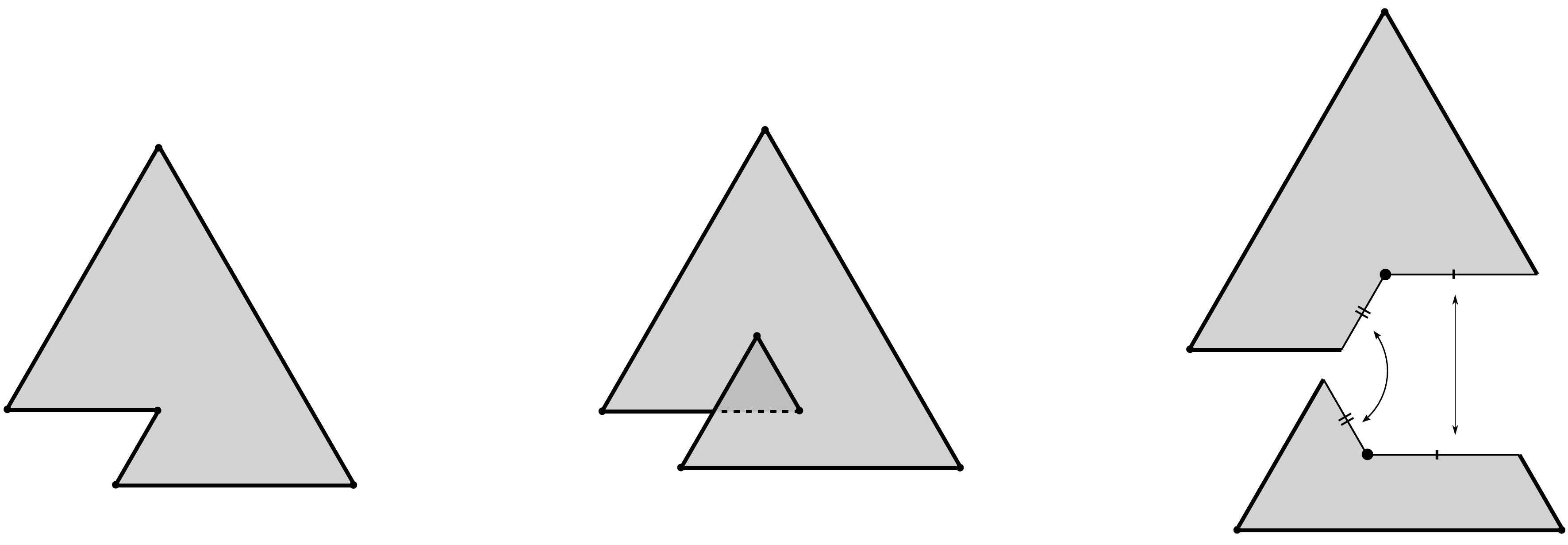}
	\caption{Three $\frac{1}{3}$-translation disks with real boundary, with the indices of the corners marked. The first two examples do not have zeros in the interior. The last one is constructed from gluing, and the two black dots give a simple zero in the interior.}
	\label{figure_real}
\end{figure}

As mentioned, $l(\ve{S},q)$ is an analogue of the multiplicity $n(\ve{S},p)$ and appears in the boundary term of the Gauss-Bonnet formula. The formula in this case is:
\begin{theorem}[\textbf{Gauss-Bonnet}]\label{thm_gaussbonnet}
For any compact $\frac{1}{3}$-translation surface $\ve{S}$ with real boundary, we have (see the preceding paragraphs for the notations)
\begin{equation}\label{eqn_gb}
-\frac{2\pi}{3}\sum_{p\in Z(\ve{S})}n(\ve{S},p)-\frac{\pi}{3}\sum_{q\in C(\ve{S})}l(\ve{S},q)=2\pi\chi(\ve{S}).
\end{equation}
\end{theorem}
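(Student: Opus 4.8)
The plan is to reduce \eqref{eqn_gb} to the classical Gauss--Bonnet formula for a compact flat surface with conical singularities and piecewise-geodesic boundary, and then substitute the cone angles and interior angles prescribed by the $\frac13$-translation structure. Concretely, I would first prove the general identity
$$
\sum_{p}(2\pi-\theta_p)+\sum_{q}(\pi-\alpha_q)=2\pi\chi(\ve{S}),
$$
where the first sum runs over the conical singularities of $\ve{S}$ (with cone angle $\theta_p$) and the second over the corners $q\in C(\ve{S})$ (with interior angle $\alpha_q:=\angle(\ve{S},q)$). Granting this, the theorem follows at once: a zero of multiplicity $n(\ve{S},p)$ has cone angle $\theta_p=2\pi+\frac{2\pi}{3}n(\ve{S},p)$, so $2\pi-\theta_p=-\frac{2\pi}{3}n(\ve{S},p)$; a corner has interior angle $\alpha_q=\pi+\frac{\pi}{3}l(\ve{S},q)$, so $\pi-\alpha_q=-\frac{\pi}{3}l(\ve{S},q)$. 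Substituting these into the general identity yields exactly \eqref{eqn_gb}. Note that $Z(\ve{S})$ and $C(\ve{S})$ are finite because $\ve{S}$ is compact and $\pa\ve{S}$ is piecewise-geodesic with locally finitely many pieces, so all sums are well defined.

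To establish the general identity I would use a geodesic triangulation. Choose a triangulation $\mathcal{T}$ of $\ve{S}$ whose edges are geodesic segments of the flat metric and whose vertex set contains every conical singularity and every corner, with the boundary edges lying along $\pa\ve{S}$ (automatic, since the boundary pieces are themselves geodesics). Write $V,E,F$ for the numbers of vertices, edges and faces, so that $V-E+F=\chi(\ve{S})$. Because each face contains no singular point in its interior or along its edges, it is isometric to an honest Euclidean triangle, whose interior angles sum to $\pi$; hence the sum of all angles of all triangles equals $\pi F$. I would then recompute this same sum by grouping angles around each vertex: an interior conical vertex contributes $\theta_p$, an interior non-singular vertex contributes $2\pi$, a boundary corner contributes $\alpha_q$, and an ordinary boundary vertex contributes $\pi$.

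Equating the two counts and eliminating the non-singular contributions is pure bookkeeping with Euler's formula. Writing $V_{\mathrm{int}},V_{\mathrm{bd}}$ for the numbers of interior and boundary vertices, the incidence relation $3F=2E_{\mathrm{int}}+E_{\mathrm{bd}}$ together with $E_{\mathrm{bd}}=V_{\mathrm{bd}}$ (each boundary component being a cycle) gives $F=2V_{\mathrm{int}}+V_{\mathrm{bd}}-2\chi(\ve{S})$. Substituting this into the angle identity $\pi F=\sum_p\theta_p+2\pi(\#\text{ordinary interior vertices})+\sum_q\alpha_q+\pi(\#\text{ordinary boundary vertices})$, the $2\pi$ contributions of the ordinary interior vertices and the $\pi$ contributions of the ordinary boundary vertices cancel exactly, leaving $\sum_p(2\pi-\theta_p)+\sum_q(\pi-\alpha_q)=2\pi\chi(\ve{S})$.

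The one point requiring care, and the main obstacle, is the existence of a geodesic triangulation with the prescribed vertices: one must triangulate the compact flat cone surface with real boundary by Euclidean triangles whose singular points occur only at vertices. This is standard for flat surfaces with conical singularities (for instance via a Delaunay-type construction with respect to the flat metric), and the piecewise-geodesic boundary causes no additional difficulty since its segments can be taken as edges; I would simply invoke this fact. Once the triangulation is fixed, the remainder is elementary counting, and the sign conventions in the definitions of $n(\ve{S},p)$ and $l(\ve{S},q)$ are arranged precisely so that the substitution reproduces \eqref{eqn_gb}.
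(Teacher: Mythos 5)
Your proposal is correct and follows essentially the same route as the paper, which reduces the statement to the generalized Gauss--Bonnet formula for flat surfaces with conical singularities and piecewise geodesic boundary and proves that by decomposing into Euclidean triangles (the paper omits the details that you supply). Your angle-count and Euler-characteristic bookkeeping checks out, and the one point you flag as requiring care --- the existence of a geodesic triangulation with the singularities and corners among the vertices --- is exactly the standard fact the paper also implicitly invokes.
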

One readily verifies the theorem for the examples in Figure \ref{figure_real}. The theorem easily generalized to flat surfaces with conical singularities and piecewise geodesic boundary, and the generalized version can be proved by decomposing the surface into Euclidean triangles, similarly as in the proof of classical Gauss-Bonnet formula through triangulation (see \eg \cite{docarmo}). We omit the details.

Finally, we define the \emph{conjugate} $\overline{\ve{S}}$ of a $\frac{1}{3}$-translation surface $\ve{S}$ as the $\frac{1}{3}$-translation surface obtained by composing every $(\mathbb{C},\dz^3)$-valued local chart of $\ve{S}$ with the conjugation $\mathbb{C}\to\mathbb{C}$, $z\mapsto\bar{z}$. 
In other words, if $\ve{S}$ is as in Figure \ref{figure_real}, then $\overline{\ve{S}}$ is the mirror image obtained by reflecting $\ve{S}$ across a real trajectory in $(\mathbb{C},\dz^3)$. With the terminologies introduced in this subsection, we deduce from Proposition \ref{prop_conjugate}: 
\begin{corollary}\label{coro_flip}
Let $\Omega$, $L$ and $\sigma$ be as in Proposition \ref{prop_conjugate} and $U\subset\Omega$ be an open set as in Figure \ref{figure_reflection}, whose boundary meets $L$ along a segment $I$. Then $U\cup I$ (as a subsurface of $(\Omega,\ve{\psi}_\Omega)$) is a $\frac{1}{3}$-translation surface with real boundary, and $\sigma(U)\cup I$ is its conjugate. For each corner $q$ of $U\cup I$ in the interior of $I$, the angle $\angle(U\cup I, q)$ is bigger than $\pi$.
\end{corollary}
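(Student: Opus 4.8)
The plan is to extract from Proposition \ref{prop_conjugate} the single clean relation $\sigma^*\ve{\psi}_\Omega=\overline{\ve{\psi}_\Omega}$ and let it drive all three assertions. Indeed, the proof of Proposition \ref{prop_conjugate} gives $\ve{\psi}_\Omega=\sigma^*\overline{\ve{\psi}_\Omega}$; applying $\sigma^*$ once more and using $\sigma^2=\id$ yields $\sigma^*\ve{\psi}_\Omega=\overline{\ve{\psi}_\Omega}$. For the first assertion I would note that $(\Omega,\ve{\psi}_\Omega)$ is a $\frac{1}{3}$-translation surface in the sense of \S\ref{subsec_real}, so the open subset $U$ inherits this structure, and the only boundary piece we adjoin is $I\subset L$. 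Since Proposition \ref{prop_conjugate} asserts that $L$ is a real trajectory, in any local $(\mathbb{C},\dz^3)$-chart the segment $I$ lies in the three distinguished directions and is broken only at the (isolated, hence locally finite) zeros of $\ve{\psi}_\Omega$ on it; this is exactly Definition \ref{def_realboundary}, so $U\cup I$ is a $\frac{1}{3}$-translation surface with real boundary.

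For the second assertion I would exhibit $\sigma$ itself as the isomorphism $\overline{U\cup I}\xrightarrow{\sim}\sigma(U)\cup I$. Because $L$ passes through $\Omega$, the map $\sigma$ reverses the orientation of $\Omega$, hence is anti-holomorphic for $\ac{J}_\Omega$, and it fixes $I$ pointwise so that $\sigma(U\cup I)=\sigma(U)\cup I$. If $\phi$ is a defining chart of $\sigma(U)\cup I$, i.e. $\phi^*\dz^3=\ve{\psi}_\Omega$, then $(\phi\circ\sigma)^*\dz^3=\sigma^*\ve{\psi}_\Omega=\overline{\ve{\psi}_\Omega}$, which is exactly what a conjugated chart $z\mapsto\bar z$ of $U\cup I$ pulls $\dz^3$ back to. Two charts pulling $\dz^3$ back to the same cubic differential differ by a transition $z\mapsto\zeta z+c$ with $\zeta^3=1$, so $\phi\circ\sigma$ lies in the conjugate atlas of $U\cup I$; thus $\sigma$ carries charts of $\sigma(U)\cup I$ to charts of $\overline{U\cup I}$, identifying $\sigma(U)\cup I$ with $\overline{U\cup I}$.

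For the angle statement, I would first argue that a corner $q$ in the interior of $I$ can only sit at a zero of $\ve{\psi}_\Omega$: away from zeros a real trajectory is a straight segment of the flat metric $|\ve{\psi}_\Omega|^\frac{2}{3}$, so $I$ cannot bend there. At a zero of multiplicity $n\geq1$ the cone angle is $2\pi+\frac{2\pi}{3}n$. The relation $\sigma^*\ve{\psi}_\Omega=\overline{\ve{\psi}_\Omega}$ shows $\sigma$ preserves $|\ve{\psi}_\Omega|^\frac{2}{3}$ and fixes $L$ pointwise, i.e. $\sigma$ is a reflection across $L$; since $q$ lies in the interior of $\Omega$, the line $L$ splits its full cone neighborhood into the sector of $U$ and the sector of $\sigma(U)$, and $\sigma$ interchanges them isometrically. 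These two equal sectors fill the whole cone, so $\angle(U\cup I,q)=\pi+\frac{\pi}{3}n>\pi$.

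I expect the main obstacle to be the bookkeeping in the second assertion: keeping straight that $\sigma$ is orientation-reversing, so that it genuinely matches the orientation-reversing conjugation $z\mapsto\bar z$, and that equality of pulled-back cubic differentials really does force agreement of the two $\frac{1}{3}$-translation atlases up to the allowed $\zeta^3=1$ transitions. Once this identification is secured, the reflection symmetry it encodes makes the angle computation immediate, the only remaining point needing care being the verification that $q$ lies in the interior of $\Omega$ so that its cone neighborhood is divided into exactly the two sectors by $L$.
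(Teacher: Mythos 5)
Your proposal is correct and follows essentially the same route as the paper: the paper deduces the first two assertions directly from Proposition \ref{prop_conjugate} (the relation $\sigma^*\ve{\psi}_\Omega=\overline{\ve{\psi}_\Omega}$ together with $L$ being a real trajectory), and justifies the angle bound by observing that $U\cup I$ and its conjugate $\sigma(U)\cup I$ are glued along $I$ at $q$ to form a conical singularity of cone angle greater than $2\pi$, so that each of the two equal sectors has angle greater than $\pi$. Your write-up merely makes explicit the chart-level verifications (that corners of $I$ occur only at zeros, and that $\phi\circ\sigma$ lies in the conjugate atlas) which the paper leaves implicit.
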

	\begin{figure}[h]
	\labellist
	\pinlabel {$\Omega$} at 40 50
	\pinlabel {$I$} at 235 125
	\pinlabel {$U$} at 160 105
	\pinlabel {$\sigma(U)$} at 310 105
	\pinlabel {$L$} at 240 265
	\endlabellist
	\centering
	\includegraphics[width=3.7cm]{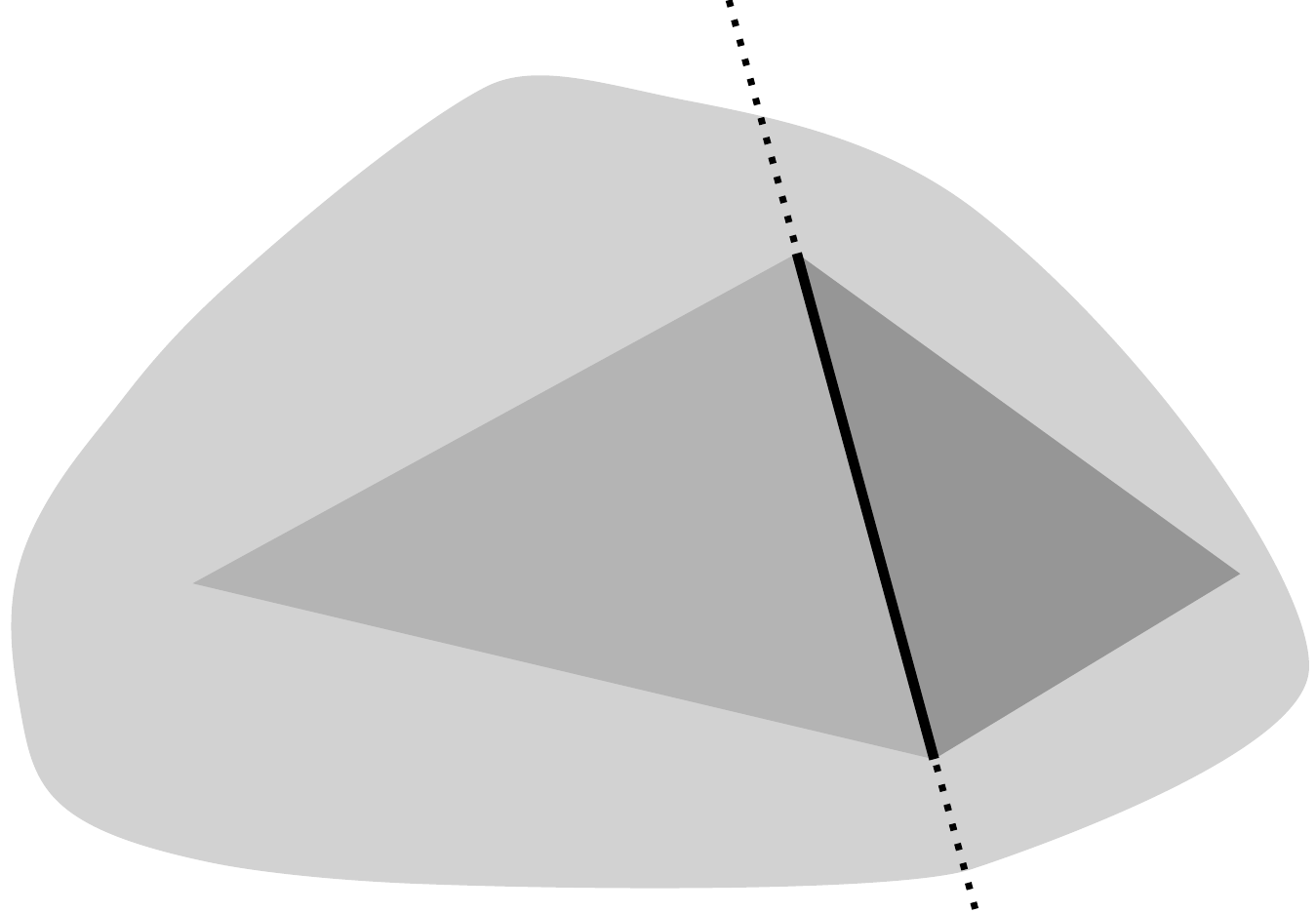}
     \caption{Corollary \ref{coro_flip}.}
    \label{figure_reflection}
\end{figure}
The last statement is because $U\cup I$ and its conjugate $\sigma(U)\cup I$ are glued along $I$ to form a conical singularity of $\ve{\psi}_\Omega$  at $q$, of cone angle bigger than $2\pi$ (see the first paragraph of this subsection).

\subsection{Hyperbolic triangle groups and projective deformations}\label{subsec_deformation}
We henceforth let $a$, $b$ and $c$ be positive integers satisfying  $\frac{1}{a}+\frac{1}{b}+\frac{1}{c}<1$ and fix an inclusion of the hyperbolic plane $\mathbb{H}^2$ into $\RP$ as an ellipse (Klein model), so that the isometry group of $\mathbb{H}^2$ identifies with the group $\SO(2,1)<\SL(3,\R)$ of projective transformations preserving the ellipse. For every projective line $L$ passing through $\mathbb{H}^2$, there is a unique reflection across $L$ belonging to $\SO(2,1)$, namely the hyperbolic orthogonal reflection of $\mathbb{H}^2$ across $L$.

A geodesic triangle $\Delta\subset\mathbb{H}^2$ with angles $\frac{\pi}{a}$, $\frac{\pi}{b}$ and $\frac{\pi}{c}$ is called an \emph{$(a,b,c)$-triangle}. The corresponding \emph{hyperbolic triangle group}, denoted by $\Gamma_\Delta$, is the subgroup of $\SO(2,1)$ generated by the reflections $\sigma_1,\sigma_2,\sigma_3\in\SO(2,1)$ across the three sides of $\Delta$. 
We let $\ttri_\Delta$ denote the connected component in the space of representations $\Hom(\Gamma_\Delta,\SL(3,\R))$ containing the inclusion $\Gamma_\Delta\hookrightarrow\SO(2,1)\subset\SL(3,\R)$. Then each $\rho\in\ttri_\Delta$, referred to as a \emph{projective deformation} of $\Gamma_\Delta$, also has a fundamental triangle $\Delta_\rho\subset\RP$ such that $\rho(\sigma_1)$, $\rho(\sigma_2)$ and $\rho(\sigma_3)$ are reflections across the sides of $\Delta_\rho$. A theorem of Tits (see \cite[Thm. 2.2]{marquis_coxeter}) implies that $\Delta_\rho$ is a fundamental domain for the action of $\rho(\Gamma_\Delta)$ on some properly convex domain $\Omega_\rho\subset\RP$, hence we get pictures like Figure \ref{figure_tits}. The theorem can be formulated in the current setting as:
\begin{proposition}\label{prop_titsset}
	Let $\Delta\subset\mathbb{H}^2$ be an $(a,b,c)$-triangle and $\rho\in\ttri_\Delta$ be a projective deformation of $\Gamma_\Delta$. Then the triangles $(\rho(\gamma).\Delta_\rho)_{\gamma\in\Gamma_\Delta}$ have disjoint interiors and the union
	$$
	\Omega_\rho:=\bigcup_{\gamma\in\Gamma}\rho(\gamma).\Delta_\rho
	$$
	is a properly convex domain in $\RP$ preserved by $\rho(\Gamma_\rho)$.
\end{proposition}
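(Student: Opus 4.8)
The plan is to recognize $\Gamma_\Delta$ as an abstract Coxeter group and to deduce the statement from the theory of projective (linear) Coxeter groups due to Tits and Vinberg, reformulated as in \cite{marquis_coxeter}. Concretely, $\Gamma_\Delta$ admits the Coxeter presentation with generators $\sigma_1,\sigma_2,\sigma_3$ and relations $\sigma_i^2=\id$ together with $(\sigma_i\sigma_j)^{m_{ij}}=\id$, where the exponents $m_{12},m_{13},m_{23}$ are the integers $a,b,c$ attached to the three vertices of $\Delta$. Since $\rho\in\ttri_\Delta$ is a genuine homomorphism, the images $\rho(\sigma_1),\rho(\sigma_2),\rho(\sigma_3)$ are projective reflections across the three sides of the fundamental triangle $\Delta_\rho$ and automatically satisfy the same relations; in particular each product $\rho(\sigma_i)\rho(\sigma_j)$ has finite order dividing $m_{ij}$ and fixes the vertex $v_{ij}$ where sides $i$ and $j$ meet.

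The crux is the local tiling condition at each vertex: I would show that the $2m_{ij}$ images of $\Delta_\rho$ under the dihedral group $\langle\rho(\sigma_i),\rho(\sigma_j)\rangle$ have pairwise disjoint interiors and exactly fill a neighborhood of $v_{ij}$, wrapping around the pencil of lines through $v_{ij}$ precisely once. The only thing that could fail is that the order-$m_{ij}$ rotation $\rho(\sigma_i)\rho(\sigma_j)$ rotates the directions at $v_{ij}$ by $2\pi k/m_{ij}$ for some $k$ coprime to $m_{ij}$ with $k\neq\pm1$, in which case the triangles would overlap. To rule this out I would use the deformation hypothesis: because $\rho(\sigma_i)\rho(\sigma_j)$ has finite order, its eigenvalues are roots of unity, hence lie in a discrete set; as they depend continuously on $\rho$ and $\ttri_\Delta$ is connected, they are constant along $\ttri_\Delta$ and therefore equal to their values at the inclusion $\Gamma_\Delta\hookrightarrow\SO(2,1)$, namely $\{1,e^{\pm2\pi\ima/m_{ij}}\}$. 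This pins the rotation number to $k=\pm1$ and yields the correct cyclic arrangement around every vertex.

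Granting the local condition at all three vertices, the reformulation of Tits's theorem in \cite{marquis_coxeter} (equivalently Vinberg's theorem for linear Coxeter groups) applies: $\rho$ is faithful and discrete, the action of $\rho(\Gamma_\Delta)$ on the tiling is properly discontinuous, the translates $(\rho(\gamma).\Delta_\rho)_{\gamma\in\Gamma_\Delta}$ have disjoint interiors, and the union $\Omega_\rho$ is convex (it is the projectivization of the interior of the Tits cone). It remains to check proper convexity, \ie that $\Omega_\rho$ lies as a bounded convex set in some affine chart. Here I would use that the underlying Coxeter group is of hyperbolic (Lann\'er) type because $\frac1a+\frac1b+\frac1c<1$, so the associated invariant bilinear form has Lorentzian signature $(2,1)$ and the Tits cone is salient (contains no affine line); projectivizing a salient convex cone gives a properly convex domain. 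The invariance $\rho(\Gamma_\Delta)\Omega_\rho=\Omega_\rho$ is immediate from the definition of $\Omega_\rho$ as a union of $\rho(\Gamma_\Delta)$-orbits of $\Delta_\rho$.

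The main obstacle I anticipate is the local winding verification of the second paragraph: turning the algebraic fact $(\rho(\sigma_i)\rho(\sigma_j))^{m_{ij}}=\id$ into the geometric statement that the dihedral translates tile a punctured neighborhood of $v_{ij}$ without overlap. The deformation-from-the-inclusion hypothesis is exactly what makes this tractable, transporting the discrete eigenvalue data from $\SO(2,1)$, where the picture is the classical hyperbolic tiling, to all of $\ttri_\Delta$. Once the local combinatorics is controlled, the global convexity and proper discontinuity are the standard output of Vinberg's machinery and require no new ideas.
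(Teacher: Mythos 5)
Your overall strategy coincides with the paper's: the paper offers no independent proof of this proposition, but states it explicitly as a reformulation of Tits's theorem as presented in \cite[Thm.\ 2.2]{marquis_coxeter} (equivalently, Vinberg's theory of linear Coxeter groups), which is exactly the machinery you invoke. Your additional verification that the eigenvalues of $\rho(\sigma_i)\rho(\sigma_j)$ are locally constant---hence equal to $\{1,e^{\pm 2\pi\ima/m_{ij}}\}$ by connectedness of $\ttri_\Delta$---is a sensible way to confirm that the Cartan matrix of the triple of reflections satisfies $a_{ij}a_{ji}=4\cos^2(\pi/m_{ij})$ rather than $4\cos^2(k\pi/m_{ij})$ with $k\neq\pm1$; this is the hypothesis under which Vinberg's theorem delivers the disjointness of interiors, the convexity of the union, and proper discontinuity, so you do not need to re-derive the local tiling around each vertex by hand.

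There is, however, one genuine error in your justification of \emph{proper} convexity. You argue that since $\frac1a+\frac1b+\frac1c<1$ the ``associated invariant bilinear form has Lorentzian signature $(2,1)$,'' hence the Tits cone is salient. But for a nontrivial deformation $\rho$ there is no invariant bilinear form at all: the Cartan matrix of the three reflections is symmetrizable if and only if the cyclic products satisfy $a_{12}a_{23}a_{31}=a_{21}a_{32}a_{13}$, and the failure of this identity is precisely the one real parameter that makes $\tri_\Delta\cong\R$ nontrivial (Proposition \ref{prop_para1}). Only the trivial deformations preserve a quadratic form (and there $\Omega_\rho$ is an ellipse). The correct statement from Vinberg's theory is that proper convexity of the projectivized Tits cone follows from the Cartan matrix being irreducible of negative type, which holds here because $\frac1a+\frac1b+\frac1c<1$ forces the matrix to be neither of finite nor of affine type; alternatively, proper convexity is part of the conclusion of the theorem cited in \cite{marquis_coxeter}, so you may simply take it from there. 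With that substitution your argument is complete and agrees with the paper's citation-based treatment.
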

We call the topological quotient $\tri_\Delta:=\ttri_\Delta/\SL(3,\R)$ the \emph{moduli space} of projective deformations of $\Gamma_\Delta$. It has a distinguished point given by \emph{trivial deformations}, \ie representations $\rho\in\ttri_\Delta$ conjugate to the inclusion $\Gamma_\Delta\to\SL(3,\R)$, which are characterized by the property that $\Omega_\rho$ is an ellipse.
It turns out that if one of the integers $a$, $b$ and $c$ equals $3$, then this is the only element of $\tri_\Delta$; otherwise, $\tri_\Delta$ can be identified with the real line (see \cite{goldman_geometric, nie_tits}):
\begin{proposition}\label{prop_para1}
Let $\Delta\subset\mathbb{H}^2$ be an $(a,b,c)$-triangle. Then the space $\tri_\Delta$ is not a single point if and only if $a,b,c\geq3$. In this case, $\tri_\Delta$ is homeomorphic to $\R$.
\end{proposition}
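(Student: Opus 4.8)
The plan is to parametrize the projective deformations of $\Gamma_\Delta$ by Cartan matrices in the style of Vinberg, and then reduce the proposition to a dimension count. Recall that $\Gamma_\Delta$ has the Coxeter presentation $\langle\sigma_1,\sigma_2,\sigma_3\mid\sigma_i^2=(\sigma_i\sigma_j)^{m_{ij}}=1\rangle$, where $m_{ij}\in\{a,b,c\}$ is the order of the rotation $\sigma_i\sigma_j$ about the corresponding vertex. For $\rho\in\ttri_\Delta$, each $\rho(\sigma_i)$ is a projective reflection, so it has the form $R_i=\id-v_i\otimes\alpha_i$ with $v_i\in\R^3$, $\alpha_i\in\R^{3*}$ and $\alpha_i(v_i)=2$; here $[v_i]$ is its fixed point and $\ker\alpha_i$ its fixed line. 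I would encode $\rho$ by the Cartan matrix $A=(A_{ij})$ with $A_{ij}:=\alpha_i(v_j)$ and $A_{ii}=2$, first checking that for the irreducible configurations occurring in $\ttri_\Delta$ (three mirrors in general position), the triple $(R_1,R_2,R_3)$, and hence $\rho$ up to conjugacy, is recovered from $A$ up to the residual rescaling $v_i\mapsto t_iv_i$, $\alpha_i\mapsto t_i^{-1}\alpha_i$, which acts by $A_{ij}\mapsto(t_j/t_i)A_{ij}$ for $i\neq j$.

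The next step is to translate the relations $(\sigma_i\sigma_j)^{m_{ij}}=1$ into conditions on $A$. A direct computation shows that $R_iR_j$ preserves $\mathrm{span}(v_i,v_j)$ and acts there with determinant $1$ and trace $A_{ij}A_{ji}-2$. Requiring $R_iR_j$ to have order exactly $m_{ij}$ then produces two regimes. If $m_{ij}\geq3$, the block must be an elliptic rotation by $2\pi/m_{ij}$, forcing $A_{ij}A_{ji}=4\cos^2(\pi/m_{ij})\in(0,4)$ with both $A_{ij},A_{ji}\neq0$. If $m_{ij}=2$, the trace equals $-2$, so the block is $-I$ plus a possible nilpotent part; order \emph{exactly} $2$ kills the nilpotent part and forces $A_{ij}=A_{ji}=0$. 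This dichotomy is the crux of the whole statement.

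With these constraints the count is immediate. The six off-diagonal entries are constrained by the three pinned products $A_{ij}A_{ji}$, and the two-dimensional group of diagonal conjugations has as invariants the two cyclic products $P:=A_{12}A_{23}A_{31}$ and $Q:=A_{21}A_{32}A_{13}$, subject to $PQ=\prod_{i<j}A_{ij}A_{ji}$ being fixed. When $a,b,c\geq3$ all six entries are nonzero, so $P$ is a single freely varying real modulus and $\dim\tri_\Delta=1$; the trivial deformation is the symmetric Fuchsian point $P=Q$ (where $A$ is symmetric, \ie $\rho$ lands in $\SO(2,1)$). When one of $a,b,c$ equals $2$, the corresponding pair of entries vanishes, $P=Q=0$ automatically, and the two surviving off-diagonal products are absorbed entirely by the diagonal conjugations, leaving $\dim\tri_\Delta=0$, necessarily the inclusion. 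This gives the ``if and only if''.

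The genuinely delicate step is to upgrade ``$\dim\tri_\Delta=1$'' to ``$\tri_\Delta\cong\R$''. Here I would show that the modulus $P$ identifies the Cartan matrices arising from the connected component $\ttri_\Delta$ (irreducible, preserving a properly convex $\Omega_\rho$ as in Proposition \ref{prop_titsset}) with an \emph{open interval}: the map $\rho\mapsto P$ is a continuous injection, its image is connected because $\ttri_\Delta$ is, and at the two ends of the interval the configuration degenerates — the three mirrors becoming concurrent or the representation becoming reducible, so that convexity or irreducibility fails — whence the image is a full open interval, homeomorphic to $\R$. I expect controlling this boundary behavior, and confirming that the whole component $\ttri_\Delta$ is swept out rather than a proper subinterval, to be the main obstacle. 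It can alternatively be bypassed by invoking the theorem of Goldman and Choi that the deformation space of convex projective structures on a triangle reflection orbifold is a cell, combined with the above dimension count (\cf \cite{goldman_geometric, nie_tits, marquis_coxeter}).
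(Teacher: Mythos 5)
The paper does not actually prove Proposition \ref{prop_para1}: it is imported from \cite{goldman_geometric, nie_tits}, and the only related argument in the text, the parametrization of $\tri_\Delta$ by the side length $t(\rho)$ of the flat triangle $\ve{A}_t$ in Proposition \ref{prop_trho}, itself leans on Proposition \ref{prop_para1} to upgrade a continuous bijection $\tri_\Delta\to\R$ to a homeomorphism. Your Vinberg-style Cartan-matrix argument is essentially the proof contained in the cited references, and the computations you spell out are correct: the $2\times2$ block of $R_iR_j$ on $\mathrm{span}(v_i,v_j)$ does have trace $A_{ij}A_{ji}-2$ and determinant $1$, the $m_{ij}=2$ case does force $A_{ij}=A_{ji}=0$, and the invariants of the diagonal rescaling are as you say, so the dimension count giving $1$ versus $0$ is right.

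Two points need tightening. First, ``order exactly $m_{ij}$'' only forces $A_{ij}A_{ji}=4\cos^2(k\pi/m_{ij})$ for some $k$ prime to $m_{ij}$, and by itself it does not determine the signs of the $A_{ij}$, so a priori $P$ could lie in either component of $\R^*$. Both ambiguities are resolved by the connectedness you already have in hand: on the component $\ttri_\Delta$ of the inclusion these discrete invariants cannot jump from their Fuchsian values $A_{ij}=-2\cos(\pi/m_{ij})$, so $k=1$, all off-diagonal entries are negative, and $P$ ranges in $(-\infty,0)$. You should say this explicitly, since as written the sentence ``forcing $A_{ij}A_{ji}=4\cos^2(\pi/m_{ij})$'' is false for an abstract representation of the Coxeter group. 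Second, the ``degeneration at the two ends'' picture for the last step is not the right one: nothing degenerates at any finite parameter value, because $\det A=8-2\sum_{i<j}A_{ij}A_{ji}+P+Q$ stays strictly negative on the whole ray $P<0$ (use $P+Q\le-2\sqrt{PQ}$ together with the identity $\cos^2\tfrac{\pi}{a}+\cos^2\tfrac{\pi}{b}+\cos^2\tfrac{\pi}{c}+2\cos\tfrac{\pi}{a}\cos\tfrac{\pi}{b}\cos\tfrac{\pi}{c}>1$, which is equivalent to $\tfrac1a+\tfrac1b+\tfrac1c<1$). Hence every admissible Cartan matrix is realized by an irreducible triple of reflections, this realization is a continuous section of $\rho\mapsto P$, and together with injectivity it yields the homeomorphism $\tri_\Delta\cong(-\infty,0)\cong\R$ directly, with no boundary analysis needed. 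Your alternative of invoking Choi--Goldman is also legitimate.
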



\subsection{Pick differential of $\Omega_\rho$}\label{subsec_at}
While for a trivial deformation $\rho\in\ttri_\Delta$ the domain $\Omega_\rho$ is an ellipse and hence $\ve{\psi}_{\Omega_{\rho}}$ is constantly zero (see \S \ref{subsec_affinesphere}), we give in this subsection a description of $\ve{\psi}_{\Omega_\rho}$ for nontrivial $\rho$. Given $t\in\R^*$, letting $\ve{A}_t$ denote the closed triangle in $(\mathbb{C},\dz^3)$ with vertices $0$, $t$ and $te^{\frac{\pi\ima}{3}}$, considered as a $\frac{1}{3}$-translation surface with real boundary in the sense of Definition \ref{def_realboundary} (see Figure \ref{figure_triangle}), we show:
\begin{proposition}\label{prop_trho} 
Let $\Delta\subset\mathbb{H}^2$ be an $(a,b,c)$-triangle with $a,b,c\geq3$. Then for each nontrivial projective deformation $\rho\in\ttri_\Delta$ of $\Gamma_\Delta$, there is $t=t(\rho)\in\R^*$ such that the fundamental triangle $\Delta_\rho\subset(\Omega_\rho, \ve{\psi}_{\Omega_\rho})$ is isomorphic, as a $\frac{1}{3}$-translation surface with boundary, to $\ve{A}_t$. Moreover, the map from $\ttri_\Delta$ to $\R$ sending $\rho$ to $t(\rho)$ and sending the trivial deformations to $0$ induces a homeomorphism from $\tri_\Delta=\ttri_\Delta/\SL(3,\R)$ to $\R$.
	\begin{figure}[h]
	\labellist
	\pinlabel {$|t|$} at 213 135
	\pinlabel {$t$} at 640 5
	\pinlabel {$0$} at 300 -10
	\endlabellist
	\centering
	\includegraphics[width=10cm]{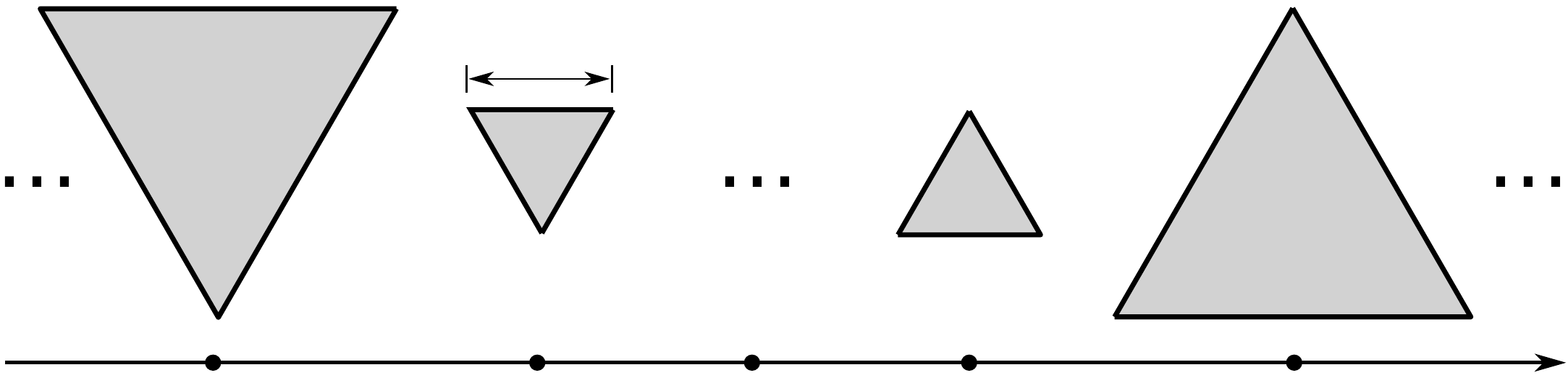}
	\caption{$\ve{A}_t$.}
	\label{figure_triangle}
\end{figure}
\end{proposition}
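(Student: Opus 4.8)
The plan is to first pin down the flat geometry of the fundamental triangle $\Delta_\rho$ inside the $\frac{1}{3}$-translation surface $\ve{S}_\rho:=(\Omega_\rho,\ve{\psi}_{\Omega_\rho})$, and then upgrade the resulting parameter $t(\rho)$ to a homeomorphism by combining the continuity of the Pick differential with the reconstruction of a convex domain from its cubic differential. For the geometric part, each side of $\Delta_\rho$ lies on a projective line fixed by one of the reflections $\rho(\sigma_i)$, and this line passes through $\Omega_\rho$; hence by Proposition \ref{prop_conjugate} it is a real trajectory of $\ve{\psi}_{\Omega_\rho}$. Thus $\Delta_\rho$, with its induced structure, is a $\frac{1}{3}$-translation surface with real boundary in the sense of Definition \ref{def_realboundary}, whose corners are the three vertices $v_a,v_b,v_c$ together with possibly some points in the interiors of the sides at which a zero of $\ve{\psi}_{\Omega_\rho}$ meets the boundary line. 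At each of the latter, Corollary \ref{coro_flip} shows the interior angle exceeds $\pi$, so its index is $\geq 1$; at the three vertices the index is $\geq -2$ by definition.

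The key step is to apply the Gauss--Bonnet formula (Theorem \ref{thm_gaussbonnet}) to the disk $\Delta_\rho$, for which $\chi=1$. Writing the three vertex indices as $l_a,l_b,l_c$, the edge-interior indices as $l_j\ (\geq1)$, and letting $n(p)\ (\geq1)$ run over the interior zeros, formula \eqref{eqn_gb} rearranges to
$$
2\sum_{p}n(p)+(l_a+l_b+l_c)+\sum_j l_j=-6.
$$
Since the first and last sums are $\geq 0$ and each of $l_a,l_b,l_c$ is $\geq-2$, every term must attain its extreme value: there are no interior zeros, no edge-interior corners, and $l_a=l_b=l_c=-2$, i.e. all three interior angles equal $\tfrac\pi3$. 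Consequently $\Delta_\rho$ is a flat, singularity-free triangle with all angles $\tfrac\pi3$ and sides along real-trajectory directions, so it is an equilateral Euclidean triangle, hence isomorphic to $\ve{A}_t$ for some $t\in\R^*$ with $|t|$ its side length (and $t\neq0$ since $\rho$ nontrivial forces $\ve{\psi}_{\Omega_\rho}\not\equiv0$). The sign of $t$ is a genuine invariant: $\ve{A}_t$ and $\ve{A}_{-t}$ are the two types (``up'' and ``down'') of triangle in the pattern formed by the real trajectories, and no orientation-preserving automorphism of $(\mathbb{C},\dz^3)$ --- necessarily of the form $z\mapsto\omega z+b$ with $\omega^3=1$ --- interchanges them. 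Fixing once and for all an orientation on the domains, this makes $t(\rho)\in\R^*$ well-defined and proves the first assertion.

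For the ``Moreover'' part, note first that $t$ is a projective invariant: if $\rho'=g\rho g^{-1}$ then $g$ carries $(\Omega_\rho,\ve{\psi}_{\Omega_\rho})$ isomorphically to $(\Omega_{\rho'},\ve{\psi}_{\Omega_{\rho'}})$ by equivariance (\S\ref{subsec_affinesphere}), so $\Delta_\rho\cong\Delta_{\rho'}$ and $t(\rho)=t(\rho')$. Hence $\rho\mapsto t(\rho)$ descends to a map $\phi:\tri_\Delta\to\R$, sending the trivial deformations (where $\Omega_\rho$ is an ellipse and $\ve{\psi}_{\Omega_\rho}\equiv0$) to $0$. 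Continuity follows from Lemma \ref{lemma_benoisthulin}: as $\rho$ varies, $\Omega_\rho$ varies continuously in $\C$, so $\ve{\psi}_{\Omega_\rho}$ converges uniformly on compacta, whence $|t(\rho)|$ varies continuously while the sign is locally constant away from the trivial locus, near which $|t|\to0$. Injectivity comes from reconstruction: by construction $\ve{S}_\rho$ is assembled from copies of $\Delta_\rho\cong\ve{A}_t$ (and their conjugates) glued according to the fixed combinatorics of $\Gamma_\Delta$, so $t$ determines $\ve{S}_\rho$ up to isomorphism of $\frac13$-translation surfaces; by the correspondence of \S\ref{subsec_affinesphere} (equivalently, Wang's developing map, Proposition \ref{prop_wangdeveloping}) this determines $\Omega_\rho$ up to projective equivalence, and the conjugating element carries the reflection group of $\Omega_\rho$, recovered from the canonical triangle tiling, to that of $\Omega_{\rho'}$, giving $[\rho]=[\rho']$ in $\tri_\Delta$.

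It remains to see that $\phi$ is onto $\R$. Since $\tri_\Delta\cong\R$ (Proposition \ref{prop_para1}) and $\phi$ is a continuous injection, it is strictly monotonic onto an open interval containing $0$; I would close the argument by exhibiting an inverse. For each $t\in\R^*$, glue copies of $\ve{A}_t$ along the $\Gamma_\Delta$-pattern to obtain a contractible $\frac13$-translation surface $(X_t,\ve{\psi}_t)$ with $\ve{\psi}_t\not\equiv0$; the correspondence of \S\ref{subsec_affinesphere} then produces an oriented properly convex $\Omega_t$ whose reflections across the sides of the central triangle define $\rho_t:\Gamma_\Delta\to\SL(3,\R)$ with invariant domain $\Omega_t$ (Proposition \ref{prop_titsset}) and $t(\rho_t)=t$. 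The one point requiring care --- and the main obstacle of the whole argument --- is verifying that $\rho_t$ genuinely lies in the component $\ttri_\Delta$ of the inclusion; I would establish this by a connectedness argument, letting $t$ run continuously from $0$ (the ellipse, i.e. the inclusion) and invoking the continuity above to remain in a single component. Granting this, $t\mapsto[\rho_t]$ is a continuous inverse to $\phi$, so $\phi$ is a homeomorphism $\tri_\Delta\overset\sim\to\R$.
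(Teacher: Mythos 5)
Your proposal is correct and follows essentially the same route as the paper: the heart of the argument is the identical Gauss--Bonnet computation $2\sum_p n(p)+\sum_q l(q)=-6$ forcing no interior zeros, no edge-interior corners, and all three vertex indices equal to $-2$, and the second statement is likewise handled via $\SL(3,\R)$-invariance, continuity from Lemma \ref{lemma_benoisthulin}, and bijectivity through the domain/cubic-differential correspondence. The only differences are presentational: the paper gets continuity (sign included) more cleanly from the formula $t(\rho_s)=\int_I\ve{\psi}_s^{1/3}$ along an oriented side $I$, while you spell out the injectivity/surjectivity step that the paper compresses into ``bijective by the construction of Pick differentials,'' modulo the component-membership point you yourself flag.
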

We postpone the proof to the end of this section, and proceed to explain how the proposition allows us to understand $\ve{\psi}_{\Omega_\rho}$ and prove Theorem \ref{thm_intro3}.

Let $\Gamma_\Delta^+$ denote the index $2$ subgroup of $\Gamma_\Delta$ formed by orientation-preserving automorphisms of $\mathbb{H}^2$, so that $\rho(\Gamma_\Delta^+)$ is also the subgroup of $\rho(\Gamma_\Delta)$ preserving the orientation of $\Omega$. If we identify $\Delta_\rho$ with $\ve{A}_t$ as in the proposition, then every $\gamma(\Delta_\rho)$ with $\gamma\in\Gamma_\Delta^+$, shown in Figure \ref{figure_tits} as a shaded triangle, also gets identified with $\ve{A}_t$, whereas by Corollary \ref{coro_flip}, every blank triangle $\gamma(\Delta_\rho)$ ($\gamma\in\Gamma_\Delta\setminus\Gamma_\Delta^+$) in the figure is identified with the conjugate of $\ve{A}_t$, which is exactly $\ve{A}_{-t}$. 

Therefore, the cubic differential $\ve{\psi}_{\Omega_\rho}$ can be understood as obtained by first endowing each shaded (resp. blank) triangle in Figure \ref{figure_tits} with the structure of $\ve{A}_t$ (resp. $\ve{A}_{-t}$), then gluing along the sides. In particular, the zeros of $\ve{\psi}_{\Omega_\rho}$, which are the conical singularities  of the resulting $\frac{1}{3}$-translation surface (see \S \ref{subsec_real}), can only be at the vertices of these triangles.
More precisely, if $p\in\Omega_\rho$ is a common vertex of $n+3$ shaded triangles ($n\geq0$), then $p$ is a conical singularity of cone angle $2\pi+\frac{2\pi}{3}n$ if $n\geq1$ and is an ordinary point if $n=0$. Noting that the number of shaded triangles sharing a vertex can only be either $a$, $b$ or $c$, we can deduce Theorem \ref{thm_intro3} from Theorem \ref{thm_intro2}:

\begin{proof}[Proof of Theorem \ref{thm_intro3}]
	We may suppose that the homeomorphism $\tri_\Delta\cong\R$ in the assumption of the corollary is the one from Proposition \ref{prop_trho}, so that the Pick differential of $\Omega_{\rho_t}$ is constructed from $\ve{A}_t$ and $\ve{A}_{-t}$ as just explained. It satisfies the assumption of Theorem \ref{thm_intro2}, and has a zero of multiplicity $n$ exactly when $n\in\{a-3, b-3, c-3\}\setminus\{0\}$. So the required conclusion follows from Theorem \ref{thm_intro2}.
\end{proof}
To prove Proposition \ref{prop_trho}, note that Corollary \ref{coro_flip} already implies that the fundamental triangle $\Delta_\rho$, as a sub-surface of the $\frac{1}{3}$-translation surface $(\Omega_\rho,\ve{\psi}_{\Omega_\rho})$, is a $\frac{1}{3}$-translation surfaces with real boundary. However, this does not immediately imply, for example, that the boundary $\pa\Delta_\rho$ only has three corners (see Definition \ref{def_realboundary}), or the corners can only be at the three vertices. Compare the remark below. 

\begin{proof}[Proof of Proposition \ref{prop_trho}]
To prove the first statement, it is sufficient to show that the $\frac{1}{3}$-translation surface $\Delta_\rho$ (with real boundary) does not have conical singularities in its interior, and that its corners are exactly the three vertices, each with angle $\frac{2\pi}{3}$.
To this end, we let $V\subset\pa\Delta_\rho$ denote the three vertices and apply the Gauss-Bonnet formula (Theorem \ref{thm_gaussbonnet}) to get
$$
-\frac{2\pi}{3}\sum_{p\in Z(\Delta_\rho)}n(\Delta_\rho,p)-\frac{\pi}{3}\sum_{q\in C(\Delta_\rho)\cap V}l(\Delta_\rho,q)-\frac{\pi}{3}\sum_{q\in C(\Delta_\rho)\setminus V}l(\Delta_\rho,q)=2\pi
$$
(see \S \ref{subsec_real} for the notations). By the discussions in \S \ref{subsec_real}, the terms on the left-hand side have the following constraints:
\begin{itemize}
	\item each $n(\Delta_\rho,p)$ is a positive integer;
	\item each $l(\Delta_\rho, q)$ is an integer no less than $-2$;
	\item if $q\in C(\Delta_\rho)\setminus V$ then $l(\Delta_\rho, q)$ is positive. 
\end{itemize}
Note that the last constraint means that the angle $\angle(\Delta_\rho,q)$ is bigger than $\pi$, see Corollary \ref{coro_flip}.
One easily checks that the only possible situation satisfying these constraints is given by $Z(\Delta_\rho)=\emptyset$, $C(\Delta_\rho)=V$, $l(\Delta_\rho,q)=-2$ (for all of the three $q\in V$). This establishes the first statement.

For the second statement, note that the map $\ttri_\Delta\to\R$, $\rho\mapsto t(\rho)$ (setting $t(\rho)=0$ if $\rho$ is a trivial deformation) is invariant under the $\SL(3,\R)$-action on $\ttri_\Delta$, and the induced map $\tri_\Delta\to\R$ is bijective by the construction of Pick differentials. Finally, to prove that the map is continuous, we can take a continuous path $(\rho_s)_{s\in\R}$ in $\ttri_\Delta$ such that every $\rho_s$ has the same fundamental triangle $\Delta_{\rho_s}=\Delta$, and only need to show that $t(\rho_s)$ depends continuously on $s$. To this end, let $\ve{\psi}_s:=\ve{\psi}_{\Omega_{\rho_s}}$ denote the Pick differential of $\Omega_{\rho_s}$, and  $I\subset\pa\Delta$ be a side of $\Delta$, endowed with the orientation induced from that of $\Delta$. The first statement proved above implies that
$$
t(\rho_s)=\int_I\ve{\psi}^\frac{1}{3}_s
$$ 
(the integration can be written more concretely as $\int_0^1\ve{\psi}_s(\dot{\gamma}(\tau),\dot{\gamma}(\tau),\dot{\gamma}(\tau))^\frac{1}{3}\dif \tau$, where $\gamma:[0,1]\to I$ is a parametrization of $I$; the cubic root is defined unambiguously because $I$ is a real trajectory). Therefore, the required continuity follows from the well-known fact that the convex domain $\Omega_\rho\in\C$ depends continuously on the representation $\rho$, and the fact that given $\Omega\in\C$, the restriction of $\ve{\psi}_\Omega$ to a compact set $K\subset\Omega$ depends continuously on $\Omega$ (see Lemma \ref{lemma_benoisthulin}; here we take $I$ as $K$).
\end{proof}
\begin{remark}
For a properly convex domain $\Omega$ generated by reflections across the sides of a convex polygon $D\subset\RP$ with more than three sides, one can extend the above argument to describe the $\frac{1}{3}$-translation structure on $D$, and hence describe the Pick differential $\ve{\psi}_\Omega$. In this case, a corner of $D$ (with interior angle bigger than $\pi$) can lie on a side, and the moduli space of the possible $\frac{1}{3}$-translation structures has dimension higher than $1$, which nonetheless can still be identified with the moduli spaces of projective deformations of $\Omega$. For instance, when $D$ is a quadrilateral, the first and third examples in Figure \ref{figure_real} can occur as the $\frac{1}{3}$-translation structure on $D$, as long as all the four corners with index $-2$ correspond to the vertices of $D$; whereas the second example cannot, because it has five corners with index $-2$, and such a corner cannot lie on a side.
\end{remark}

\bibliographystyle{amsalpha} 
\bibliography{degeneration}
\end{document}